\newcommand\BibTeX{{\rmfamily B\kern-.05em \textsc{i\kern-.025em b}\kern-.08em
T\kern-.1667em\lower.7ex\hbox{E}\kern-.125emX}}
\newcommand{\R}{\mathbb{R}}
\newcommand{\lam}{\lambda}
\newcommand{\por}[2]{\underset{\underset{#2}{\uparrow}}{#1} }
\newcommand{\hair}[1]{H(#1)}
\newcommand{\Spec}[1]{\textrm{Spec}(#1)}
\begin{document}

\title{The Palindromic Trees}

\author[1]{Tadashi Akagi}

\author[2]{Eduardo A. Canale*}

\authormark{AUTHOR ONE \textsc{et al}}

\address[1]{\orgdiv{Facutlad Polit\'ecnica}, \orgname{Universidad Nacional de Asunci\'on}, \orgaddress{\state{Central}, \country{Paraguay}}}

\address[2]{\orgdiv{Instituto de Matem\'atica y Estad\'istica}, \orgname{Facultad de Ingenier\'ia, Universidad de la Rep\'ublica}, \orgaddress{\state{Montevideo}, \country{Uruguay}}}

\authormark{T. Akagi  \textsc{et al.}}

\corres{*Eduardo A. Canale,\\IMERL, Facultad de Ingenier\'ia,\\ Julio Herrera y Reissig 565, \\ Montevideo, Uruguay.\\ \email{canale@fing.edu.uy}}

\presentaddress{Present address}

\abstract[Abstract]{  
 The family of trees with palindromic  characteristic polynomials is characterized.
  Large families of  graphs with this property are found as well.
  }

\keywords{Spectral Graph Theory, Palindromic polynomials, Trees, Tensor Product, Leaves}

\jnlcitation{\cname{%
\author{T. Akagi} and
\author{E. Canale}
} (\cyear{2022}), 
\ctitle{Palindromic Trees}, \cjournal{Journal of Graph Theory}, \cvol{2022;??:?--?}.}

\maketitle

\section{Introduction}

Despite the extensive research in the field of Spectral Graph Theory, as far as we know the characterization of those graphs with palindromic or
antipalindromic characteristic polynomials has passed unnoticed. Maybe because
 those works that tabulate the characteristic polynomials do not list the leading coefficients because it is always 1. Or maybe, like in \cite{Balasubramanian1982}, the characteristic polynomials are shown factorized. 

In this work we notice that adding a pending vertex to each vertex of
a bipartite graph of (odd)even order gives rise to (anti)palindromic one, and that, in case of the trees, the operation is surjective, in the sense that any (anti)palindromic tree is obtained in this way. 
Unfortunately, the general case is more complex and a general condition has not been found yet.

We will first give  some basic definitions and results on Spectral Graph Theory, as well as some direct results concerning this topic in Section~2. Then, in Section~3, we will  characterize the (anti)palindromic trees in terms of adding pending vertices. We will also notice the relation between this operation and symplectic matrices. Finally, in Section~4, we will show how to produce non-tree palindromic graphs with tensor product of graphs, and we will discuss when this technique gives graphs with pending vertices.

\section{Definitions and Previous Results}

Given a graph $G = (V, E)$ with order $n = |V|$, its adjacent matrix  $A_G  \in \R^{n \times n}$ has elements
\[ (A_G)_{ij} = \left\{ \begin{array}{ll}
     1 & ij \in E,\\
     0 & ij \not\in E.
   \end{array} \right. \]
The characteristic polynomial of $G$ is 
$$
\chi_G(\lambda) = \det (\lambda I
- A_G) = a_0 \lambda^n + a_1 \lambda^{n - 1} + \dots  +a_{n-1}\lam+a_n.
$$ 
Clearly, $a_0 = 1$, while, for the other coefficients, we have,  following
Biggs~{\cite{biggs1974algebraic}}, that
\begin{equation}
  \label{formula} (- 1)^i a_i = \sum_{\Lambda \subset G : | \Lambda | = i} (-
  1)^{r (\Lambda)} 2^{s (\Lambda)},
\end{equation}
where $\Lambda$ is a subgraph of $G$ with connected components isomorphic to either
$K_2$ (the connected graph with two vertices) or a cycle, $r (\Lambda)$ is the rank of $\Lambda$, i.e., $i - c
(\Lambda)$ with $c (\Lambda)$ the number of connected components of $\Lambda$ and $s (\Lambda)$ its corank, i.e., the number of connected components isomorphic to a  cycle.
Clearly, if $G$ has no loops, then $a_1=0$. Besides, $|a_2|$ counts the number of edges of $G$. It  was also proved by Sachs~\cite{cvetkovic1980spectra}, that $G$  is bipartite iff  $a_i = 0$  whenever $i$ is  odd.
Moreover, if $G$ is a tree, the formula becomes
\begin{equation} \label{formula_tree}
    a_i = \begin{cases}
   \displaystyle \sum_{\Lambda \subset G : | \Lambda | = i} (-1)^{i/2}, 
    & i  \text{ even, }\\
       0 & i \text{ odd. }
   \end{cases}
\end{equation}
From this equation it is clear that the coefficient $a_i$ is, in absolute value, the number of matching of cardinality $i/2$ of the tree, i.e., the number of $i$-sets of non adjacent edges (see, for instance, Proposition 7.3 in {\cite{biggs1974algebraic}}).
In particular,   $|a_n|$ counts the number of a perfect matchings,  so $|a_n|\leq 1$.

We say that $G$ is 
\begin{itemize}
    \item \emph{palindromic}  if $a_i = a_{n - i}$
for $i = 0, \ldots, n$.
\item \emph{antipalindromic}  if $a_i = - a_{n - i}$ for $i = 0, \ldots, n$.
\item \emph{absolutely palindromic} if  $|a_i|=|a_{n-i}|$ for $i = 0, \ldots, n$.
\end{itemize}
Notice that $G$ is (anti)palindromic iff $\chi_G$ is as well.
Figure~\ref{tab:first palindromic trees} shows the first eight (anti)palindromic  trees.
\begin{proposition}\label{PMonPalitrees}
Any  (anti)palindromic  tree has exactly one perfect matching.
\end{proposition}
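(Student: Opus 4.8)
The plan is to extract the constant term of $\chi_G$ directly from the (anti)palindromic condition and then invoke the combinatorial meaning of that coefficient for trees. First I would recall that the leading coefficient is $a_0 = 1$. Applying the defining relation with $i = 0$ — namely $a_0 = a_n$ in the palindromic case and $a_0 = -a_n$ in the antipalindromic case — forces $a_n = \pm 1$. In either case this yields $|a_n| = 1$, which is the key numerical fact I need.

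Next I would appeal to the tree formula~\eqref{formula_tree}, under which $|a_i|$ equals the number of matchings of cardinality $i/2$ whenever $i$ is even. Taking $i = n$, the quantity $|a_n|$ counts precisely the perfect matchings of $G$. Since we have just shown $|a_n| = 1$, the tree $G$ possesses exactly one perfect matching, which is the assertion of the proposition. Treating the two cases simultaneously through the absolute value $|a_n|$ is what lets a single argument cover both palindromic and antipalindromic trees, since the formula controls $|a_n|$ rather than its sign.

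The only point requiring a word of care is parity, and it resolves itself automatically. A perfect matching of course requires the order $n$ to be even; the argument delivers this for free, because if $n$ were odd then~\eqref{formula_tree} would give $a_n = 0$, contradicting $|a_n| = 1$. Hence every (anti)palindromic tree has even order and the statement is non-vacuous. I do not anticipate any genuine obstacle here: once $a_0 = 1$ is combined with the (anti)palindromic symmetry, the conclusion is essentially immediate from the matching interpretation of the coefficients already recorded in the excerpt.
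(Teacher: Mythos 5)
Your proposal is correct and follows essentially the same route as the paper: the (anti)palindromic symmetry gives $|a_n| = |a_0| = 1$, and the matching interpretation of the coefficients for trees (Eq.~\eqref{formula_tree}) identifies $|a_n|$ as the number of perfect matchings, yielding uniqueness. The parity remark you add is a harmless bonus, not a needed extra step.
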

\begin{proof}
Since $|a_n|$ counts the number of perfect matching we need to prove that $|a_n|=1$, which is  true because the tree is (anti)palindromic, so $|a_n|=|a_0|=1$. 
\end{proof}
In fact, this property is  valid for any graph, but without the uniqueness condition. 
Indeed, the palindromic graph on the right hand  side of Fig.~\ref{fig:CounterExample}  has more than one perfect matching.
\begin{proposition}\label{PMonPaliGraphs}
Any  (anti)palindromic graph has a perfect matching.
\end{proposition}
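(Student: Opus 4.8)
The plan is to reduce the statement to a parity observation about the constant coefficient $a_n$. Exactly as in Proposition~\ref{PMonPalitrees}, the hypothesis gives $|a_n|=|a_0|=1$, so $a_n$ is an \emph{odd} integer; this is the only point at which the (anti)palindromic assumption is used. Everything else I would extract from Sachs' formula~(\ref{formula}) specialized to $i=n$.

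Recall that for $i=n$ the subgraphs $\Lambda$ occurring in~(\ref{formula}) are \emph{spanning}: each covers all $n$ vertices, and its components are copies of $K_2$ or cycles. A perfect matching of $G$ is exactly such a spanning $\Lambda$ all of whose components are edges, i.e.\ one with $s(\Lambda)=0$. Thus the task reduces to exhibiting a single term of the sum with $s(\Lambda)=0$.

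The key step---and essentially the whole proof---is a parity argument. I would argue by contradiction: if $G$ had no perfect matching, then every spanning $\Lambda$ would be forced to contain at least one cycle component, so $s(\Lambda)\ge 1$ for every $\Lambda$ in the sum. Each summand $(-1)^{r(\Lambda)}2^{s(\Lambda)}$ would then be even, making $(-1)^n a_n$ a sum of even integers, hence even; but $|a_n|=1$ says $a_n$ is odd, a contradiction. Therefore some spanning $\Lambda$ has $s(\Lambda)=0$, and that $\Lambda$ is the desired perfect matching.

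I expect the main---and really the only---obstacle to be spotting that the factor $2^{s(\Lambda)}$ governs the parity of $a_n$: once this is noticed the argument is immediate and uses none of the finer spectral structure. As a by-product the same count shows that for odd $n$ no spanning $\Lambda$ can be a pure matching, so $a_n$ is always even then; hence no (anti)palindromic graph has odd order, and the even order tacitly required by ``perfect matching'' comes for free rather than needing a separate argument.
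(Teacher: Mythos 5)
Your proof is correct and is essentially identical to the paper's: both reduce to the parity of $a_n$ via Sachs' formula~(\ref{formula}) at $i=n$, noting that every term with $s(\Lambda)\ge 1$ is even, so the absence of a perfect matching forces $a_n$ even, contradicting $|a_n|=|a_0|=1$. Your closing by-product (odd order is impossible) is also in the paper, as Corollary~\ref{paltheneven}.
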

\begin{proof}
If the graph has no  perfect matching, then all the terms in Eq.~\eqref{formula} for $i=n$ will be even, since the exponent of the 2, is always positive for any $\Lambda$. Thus $a_n$ is even, but by the (anti)palindromic hypothesis  $|a_n| = |a_0| = 1$ odd.
\end{proof}
Therefore we have the following corollary.
\begin{corollary}\label{paltheneven}
Every  (anti)palindromic graph has even order.
\end{corollary}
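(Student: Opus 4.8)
The plan is to obtain the corollary as an immediate consequence of Proposition~\ref{PMonPaliGraphs}. By that proposition every (anti)palindromic graph $G$ admits a perfect matching $M \subseteq E$. The only thing left to observe is the elementary combinatorial fact that the presence of a perfect matching forces the order to be even: the edges of $M$ are pairwise disjoint and cover every vertex exactly once, so they partition $V$ into $|M|$ two-element blocks, whence $n = |V| = 2|M|$. Thus $n$ is even, which is the assertion.

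The reason I would route the argument through the perfect matching rather than attempt a direct parity computation on the coefficients is that the naive shortcut fails. One might hope to argue that an odd order $n$ already forces $a_n = 0$, contradicting $|a_n| = |a_0| = 1$; but the coefficient $a_n$ is governed by Eq.~\eqref{formula}, whose spanning subgraphs $\Lambda$ may contain odd cycles, and an odd cycle can perfectly well cover an odd number of vertices. Hence odd order does not by itself annihilate $a_n$. What does the work is the exponent $s(\Lambda)$ of the factor $2^{s(\Lambda)}$: any spanning $\Lambda$ other than a perfect matching contains a cycle component and so contributes an even term, and this is precisely the mechanism already exploited in Proposition~\ref{PMonPaliGraphs}.

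Consequently there is essentially no remaining obstacle: once Proposition~\ref{PMonPaliGraphs} is in hand, the corollary is a one-line deduction. The only point requiring any care is stating clearly that a perfect matching yields the identity $n = 2|M|$; everything else is inherited from the previous result.
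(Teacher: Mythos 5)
Your proof is correct and follows exactly the paper's route: the corollary is stated there as an immediate consequence of Proposition~\ref{PMonPaliGraphs}, the implicit argument being precisely that a perfect matching partitions the vertex set into pairs, forcing even order. Your additional remark about why a direct parity argument on $a_n$ would fail is a sensible observation but not needed; the deduction itself matches the paper.
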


There are some known algebraic conditions for a polynomial to be (anti)palindromic, like the following:  \emph{ a polynomial $p (\lambda)$ of degree $n$ is palindromic iff 
\begin{equation}\label{palindromic}
    p (\lambda) = \lambda^n p (\lambda^{-1}),
\end{equation}
and antipalindromic iff
\begin{equation}\label{antipalindromic}
    p (\lambda) = -\lambda^n p (\lambda^{-1}).
\end{equation}}
Thus, if $\Spec{p}
= (\lambda_1, \lambda_2, \ldots, \lambda_n)$ with $\lambda_i \leqslant
\lambda_{i + 1}$, is the spectrum of a polynomial, then \emph{$p$ is (anti)palindromic iff} $\Spec{p} = \Spec{p}^{- 1}$ and $a_n=1$ ($a_n=-1$ respectively).

Let us define the \emph{hairing} $\hair{G}$ of a graph $G$, as a new graph obtained from the first one by  adding a pending vertex to each of its vertices. More concretely, given a positive integer $k$, let us call the \emph{$k$-hairing}  of a graph $G=(V,E)$  to the graph $H_k(G)$ with vertices $V\times\{0,1,\dots,k\}$ and edges
$(u,0)\sim (v,0)$ if $u \sim v$ in $G$ and $(u,0)\sim(u,i)$ for all $i=1,\dots,k$.
Therefore, the hairing of a graph is its $1$-hairing.

\begin{proposition}
A graph  is bipartite iff  its $k$-hairing  is as well.
\end{proposition}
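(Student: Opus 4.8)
The plan is to prove both implications through the standard characterization that a graph is bipartite iff it contains no cycle of odd length (equivalently, iff it admits a proper $2$-coloring). The key structural observation is that in $H_k(G)$ every vertex of the form $(u,i)$ with $i\geq 1$ is a pending vertex, hence has degree one, and therefore cannot lie on any cycle. Consequently every cycle of $H_k(G)$ is contained in the level-zero subgraph on $V\times\{0\}$, which is an isomorphic copy of $G$.

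For the backward implication, note that the map $u\mapsto(u,0)$ identifies $G$ with the induced subgraph of $H_k(G)$ on the level-zero vertices $V\times\{0\}$. Since any subgraph of a bipartite graph is bipartite, if $H_k(G)$ is bipartite then so is $G$. This direction is immediate and requires no construction.

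For the forward implication, I would suppose $G$ is bipartite with bipartition $V=X\cup Y$ and extend it to $H_k(G)$ by keeping $(u,0)$ on the same side as $u$ and placing every pending vertex on the opposite side; explicitly,
\[
A=\{(u,0):u\in X\}\cup\{(u,i):u\in Y,\ i\geq 1\},\qquad
B=\{(u,0):u\in Y\}\cup\{(u,i):u\in X,\ i\geq 1\}.
\]
It then remains to check the two edge types of $H_k(G)$. An edge $(u,0)\sim(v,0)$ comes from an edge $u\sim v$ of $G$, whose endpoints lie in different parts of $X\cup Y$ and hence in different parts of $A\cup B$; a pending edge $(u,0)\sim(u,i)$ joins the two parts by construction. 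Thus $A\cup B$ is a proper bipartition of $H_k(G)$.

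I expect no serious obstacle, precisely because the added vertices are leaves. The only point deserving a word of care is the case $k\geq 2$, where the several pending vertices $(u,1),\dots,(u,k)$ all share the single neighbor $(u,0)$: since these leaves are mutually non-adjacent, assigning them all the color opposite to $(u,0)$ is consistent, so the construction goes through uniformly in $k$. Equivalently, by the cycle observation above, $H_k(G)$ and $G$ have exactly the same set of cycle lengths, and the equivalence follows at once.
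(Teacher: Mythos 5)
Your proof is correct and rests on the same idea as the paper's one-line argument, namely that the pending vertices $(u,i)$, $i\geq 1$, have degree one and so lie on no cycle, hence $H_k(G)$ and $G$ share exactly the same cycles and the odd-cycle characterization settles both directions. The explicit bipartition $A\cup B$ you construct is a fine additional verification, but it is not a different route—just the same observation carried out in more detail.
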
\label{bipartite}
\begin{proof} It follows from the  fact that  both graphs  share the same cycles.
\end{proof}
 \newcommand{\parametros}{}
 
    \newcommand{\Ti}[2]{\begin{tikzpicture}
      [scale=#1,place/.style={circle,draw=black,thick,fill=black, inner sep=0pt,minimum size=#2mm}]

    \foreach \x in {1,2}{
      \node (\x) at ( \x, 0) [place] {};
    };
      	  \draw (1)--(2);
    \end{tikzpicture}    }
\newcommand{\Tii}[2]{         
\begin{tikzpicture}
      [scale=#1,place/.style={circle,draw=black,thick,fill=black, inner sep=0pt,minimum size=#2mm}]

\foreach \x in {2,3}
\node (\x) at ( \x, 0) [place] {};
\node (1) at ( 2, 1) [place] {};
\node (4) at ( 3, 1) [place] {};

      	  \draw (1)--(2)--(3)--(4);
    \end{tikzpicture}   }       
\newcommand{\Tiii}[2]{
\begin{tikzpicture}
        [scale=#1,place/.style={circle,draw=black,thick,fill=black, inner sep=0pt,minimum size=#2mm}]

\foreach \x in {2,3,4}{
\node (\x) at ( \x, 0) [place] {};};
\node (1) at ( 2, 1) [place] {};
\node (5) at ( 4, 1) [place] {};
    \node (6) at ( 3, 1) [place] {};
      	  \draw (1)--(2)--(3)--(4)--(5) (3)--(6);
    \end{tikzpicture}         }     

\newcommand{\Tiv}[2]{
\begin{tikzpicture}
        [scale=#1,place/.style={circle,draw=black,thick,fill=black, inner sep=0pt,minimum size=#2mm}]
    \foreach \x in {2,3,4,5}
\node (\x) at ( \x, 0) [place] {};   \node (1) at ( 2, 1) [place] {};  
\node (6) at ( 3, 1) [place] {};        
    \foreach \x/\y in {7/4,8/5}
       \node (\x) at ( \y, 1) [place] {};
    \draw (1)--(2)--(3)--(4)--(5) (3)--(6) (3)--(7)--(8);
    \end{tikzpicture}   }   
    
\newcommand{\Tv}[2]{
\begin{tikzpicture}
        [scale=#1,place/.style={circle,draw=black,thick,fill=black, inner sep=0pt,minimum size=#2mm}]

    \foreach \x in {2,3,4,5}{
      \node (\x) at ( \x, 0) [place] {};    };
    \node (1) at ( 2, 1) [place] {};
    \node (6) at ( 5, 1) [place] {};
    \foreach \x/\y in {7/3,8/4}
       \node (\x) at ( \y, 1) [place] {};
    \draw (1)--(2)--(3)--(4)--(5)--(6) (3)--(7) (4)--(8);
    \end{tikzpicture}        }     

\newcommand{\Tvi}[2]{
\begin{tikzpicture}
       [scale=#1,place/.style={circle,draw=black,thick,fill=black, inner sep=0pt,minimum size=#2mm}]

    \foreach \x in {1,2,3,4,5}{
      \node (\x) at ( \x, 0) [place] {};    };
    \foreach \x/\y in {7/1,8/2,9/4,10/5,11/3}
       \node (\x) at ( \y, 1) [place] {};
    \draw (1)--(2)--(3)--(4)--(5)  (3)--(8)--(7) (3)--(9)--(10) (3)--(11);
    \end{tikzpicture} }    

\newcommand{\Tvii}[2]{
\begin{tikzpicture}
       [scale=#1,place/.style={circle,draw=black,thick,fill=black, inner sep=0pt,minimum size=#2mm}]

    \foreach \x in {1,2,3,4,5}{
      \node (\x) at ( \x, 0) [place] {};    };
      \node (6) at ( 5, 1) [place] {};      
    \foreach \x/\y in {7/1,8/2,9/3,10/4}
       \node (\x) at ( \y, 1) [place] {};
    \draw (1)--(2)--(3)--(4)--(5)--(6)  (3)--(8)--(7) (3)--(9) (4)--(10);
    \end{tikzpicture}      }     

\newcommand{\Tviii}[2]{
\begin{tikzpicture}
        [scale=#1,place/.style={circle,draw=black,thick,fill=black, inner sep=0pt,minimum size=#2mm}]

    \foreach \x in {2,3,4,5,6}{
      \node (\x) at ( \x, 0) [place] {};    };
    \node (1) at ( 2, 1)[place] {};  
    \node (7) at ( 6, 1)[place] {};
    \foreach \x/\y in {8/3,9/4,10/5}
       \node (\x) at ( \y, 1) [place] {};
    \draw (1)--(2)--(3)--(4)--(5)--(6)--(7) (3)--(8) (4)--(9) (5)--(10);
    \end{tikzpicture}        }

\begin{table}[]
\caption{First eight (anti)palindromic  trees taken from \cite{Mowshowitz}.}
\centering
\begin{tabular}{llll}
\toprule
Tree & Coefficients & Tree & Coefficients \\
& $a_0, a_2, a_4, a_6, a_8, a_{10}$ &&  $a_0, a_2, a_4, a_6, a_8, a_{10}$\\\midrule
   \Ti{0.5}{1.}      &  $1,-1$   &  \Tv{0.5}{1.} & $1, -7, 13, -7, 1$\\
   \Tii{0.5}{1.}  & $1,-3, 1$ & \Tvi{0.5}{1.}  & $1, -9, 22, -22, 9, -1$\\
   \Tiii{0.5}{1.}  & $1, -5, 5, -1$ & \Tvii{0.5}{1.}  & $1, -9, 24, -24, 9, -1$ \\
    \Tiv{0.5}{1.}  & $1, -7, 12, -7, 1$ & \Tviii{0.5}{1.} & $1, -9, 25, -25, 9, -1$\\
    \bottomrule
%   \Ti      &  $x^2-1$   &  \Tv& $x^8-7x^6+13x^4-7x^2+1$\\
%   \Tii & $x^4-3x^2+1$ & \Tvi & $x^{10}-9x^8+22x^6-22x^4+9x^2-1$\\
%   \Tiii & $x^6-5x^4+5x^2-1$ & \Tvii & $x^{10}-9x^8+24x^6-24x^4+9x^2-1$ \\
%    \Tiv & $x^8-7x^6+12x^4-7x^2+1$ & \Tviii& $x^{10}-9x^8+25x^6-25x^4+9x^2-1$

    \end{tabular}    
% \vspace{0.5cm}    
    
    \label{tab:first palindromic trees}
\end{table}

\section{Main Results}

\subsection{Hairing a graph}

From Theorem 2.13 in {\cite{cvetkovic1980spectra}}, we known  that if $G$ is a graph with order $n$, then 
\begin{equation*}
  \chi_{H_k(G)} (\lambda) = \lambda^{kn} \chi_G  \left( \lambda -
  \frac{k}{\lambda} \right) .
\end{equation*}
Therefore,  for $k=1$ we obtain:
\begin{equation}\label{Cvec}
  \chi_{H (G)} (\lambda) = \lambda^n \chi_G  \left( \lambda -
  \frac{1}{\lambda} \right) .
\end{equation}
From this equation we have that 
\begin{equation}
    \label{chiH(G)} \chi_{H (G)} (\lambda) = \lambda^{2 n}  (- 1)^n \chi_{H
    (G)}  \left(\frac{-1}\lambda\right) .
  \end{equation}
  Indeed,
  \begin{align*}
  \lambda^{2 n}  (- 1)^n \chi_{H(G)}  \left(-\lam^{-1}\right)  \por{=}{by \eqref{Cvec}} 
\lambda^{2 n}  (- 1)^n \left(-\lam^{-1}\right)^n \chi_G  \left( -\lam^{-1} -
  \frac{1}{-\lam^{-1}} \right)
  =\lambda^{ n}    \chi_G  
\left(  \lambda-\frac{1}\lambda \right) \por{=}{by \eqref{Cvec}} \chi_{H(G)}  (\lambda).    
  \end{align*}
  From  Eq.~\eqref{chiH(G)} we can find a relation between symmetric coefficients.
\begin{lemma}
If $\chi_{H(G)}(\lam)= \lam^{2n}+a_1 \lam^{2n-1}+\cdots + a_{2n}$, then
$$
a_i = (-1)^{n+i} a_{2n-i} \qquad \forall i=0,\ldots,2n.
$$
\end{lemma}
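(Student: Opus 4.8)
The plan is to read off the conclusion directly from the functional equation~\eqref{chiH(G)}, which already packages the desired symmetry; the only work is to expand both sides as polynomials in $\lam$ and match coefficients of like powers. So I would start by writing $\chi_{\hair{G}}(\lam)=\sum_{i=0}^{2n} a_i \lam^{2n-i}$ (with $a_0=1$, as noted earlier) and then carry out the substitution $\lam\mapsto -1/\lam$ that appears on the right-hand side of~\eqref{chiH(G)}.

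The concrete computation is short. Substituting gives $\chi_{\hair{G}}(-1/\lam)=\sum_{i=0}^{2n} a_i\,(-1)^{2n-i}\lam^{-(2n-i)}=\sum_{i=0}^{2n}(-1)^{i} a_i\,\lam^{\,i-2n}$, where I have used $(-1)^{2n-i}=(-1)^{i}$. Multiplying through by the prefactor $\lam^{2n}(-1)^n$ from~\eqref{chiH(G)} then yields
$$
\lam^{2n}(-1)^n\,\chi_{\hair{G}}(-1/\lam)=\sum_{i=0}^{2n}(-1)^{\,n+i} a_i\,\lam^{\,i},
$$
and by~\eqref{chiH(G)} this is exactly $\chi_{\hair{G}}(\lam)=\sum_{i=0}^{2n} a_i\,\lam^{2n-i}$.

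It remains to compare the coefficient of a fixed power $\lam^{2n-i}$ on the two expressions. On the left it is plainly $a_i$; on the right, the term with exponent $2n-i$ corresponds to the summation index $2n-i$, contributing $(-1)^{\,n+(2n-i)}a_{2n-i}$. Simplifying $(-1)^{3n-i}=(-1)^{n+i}$ (since $(-1)^{3n}=(-1)^{n}$ and $(-1)^{-i}=(-1)^{i}$) gives the claimed $a_i=(-1)^{n+i}a_{2n-i}$ for every $i=0,\dots,2n$. The main obstacle here is purely clerical: keeping track of the several sign factors (one from the substitution, one from the prefactor $(-1)^n$, and one from the re-indexing) and the index shift $i\leftrightarrow 2n-i$. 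There is no conceptual difficulty once~\eqref{chiH(G)} is available, so I would present it as a routine coefficient comparison.
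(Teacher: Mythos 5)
Your proof is correct and follows essentially the same route as the paper: both expand $\chi_{H(G)}(\lam)=\sum a_i\lam^{2n-i}$, substitute into Eq.~\eqref{chiH(G)}, track the signs $(-1)^{2n-i}=(-1)^i$ and the prefactor $(-1)^n\lam^{2n}$, and equate coefficients via the index swap $i\leftrightarrow 2n-i$. The only cosmetic difference is that the paper performs the re-indexing $j=2n-i$ inside the sum while you compare the coefficient of $\lam^{2n-i}$ directly, which is the same step.
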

\begin{proof}
Indeed, plugging  the develop of $\chi_{H(G)}(\lam)$ into Eq.~\eqref{chiH(G)} we have:
\begin{align*}
\sum a_i \lam^{2n-i} &=\lam^{2n}(-1)^n \sum a_i (-\lam^{-1})^{2n-i} =\lam^{2n}(-1)^n \sum a_i (-1)^{2n-i}\lam^{-2n+i}
 = \sum a_i (-1)^{n+2n-i}\lam^{i}
\por{=}{j=2n-i}\sum a_{2n-j} (-1)^{n+j}\lam^{2n-j}.
\end{align*}
\end{proof}
From this lemma we have that $H(G)$ is palindromic iff $n+i$ is even for all $i$ or $a_i=a_{2n-i}=0$. However, since $a_0 = 1$ then $n$ should be even, and $a_i = a_{2n-i}=0$ for odd $i$, i.e., $H(G)$ must be bipartite. 
By the same argument, $H(G)$ is antipalindromic iff $n$ is odd and $H(G)$ bipartite.
Altogether we have the following theorem.

\begin{theorem}
The hairing of a graph is always absolutely palindromic. 
While it is  palindromic (respectively antipalindromic) iff it is bipartite and has even order (respectively odd order). \qed
\end{theorem}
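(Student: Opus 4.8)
The plan is to read everything off the relation $a_i = (-1)^{n+i}a_{2n-i}$ established in the preceding Lemma. The absolute palindromicity is immediate: taking absolute values gives $|a_i| = |a_{2n-i}|$ for all $i = 0,\dots,2n$, which is exactly the definition applied to the order-$2n$ graph $H(G)$. The real content therefore lies in the two biconditionals, and I would settle them by a parity analysis of the sign $(-1)^{n+i}$ combined with the single hard fact that $a_0 = 1 \neq 0$.

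For the palindromic case I would argue as follows. The defining condition is $a_i = a_{2n-i}$ for all $i$. Comparing with the Lemma, at an index $i$ with $n+i$ even the Lemma already delivers $a_i = a_{2n-i}$, so such indices impose no constraint; at an index with $n+i$ odd the Lemma gives $a_i = -a_{2n-i}$, so the palindromic identity holds there precisely when $a_i = 0$. Hence $H(G)$ is palindromic iff $a_i = 0$ for every $i$ with $n+i$ odd. Since $a_0 = 1$ can never vanish, this forces $n + 0 = n$ to be even. Conversely, once $n$ is even the offending indices are exactly the odd $i$, and the surviving requirement ``$a_i = 0$ for all odd $i$'' is, by Sachs' criterion, equivalent to $H(G)$ being bipartite, which by the earlier proposition is equivalent to $G$ being bipartite. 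This closes both directions of the palindromic biconditional.

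The antipalindromic case is the mirror image and would be handled identically. Here the defining condition $a_i = -a_{2n-i}$ is automatic at the indices with $n+i$ odd and collapses to $a_i = 0$ at the indices with $n+i$ even. Because $a_0 = 1 \neq 0$ cannot vanish, we now need $n = n+0$ to be odd; and with $n$ odd the offending indices are again the odd $i$, so the condition reduces once more to $a_i = 0$ for all odd $i$, i.e.\ to bipartiteness of $H(G)$, equivalently of $G$.

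I do not anticipate a genuine obstacle: the whole argument is bookkeeping of signs driven by the Lemma, and the special coefficient $a_0 = 1$ is what pins down the parity of $n$ in each case. The only points that require a little care are keeping both directions of each ``iff'' explicit, and transferring the bipartiteness conclusion from $H(G)$ to $G$ through Sachs' theorem and the hairing-preserves-bipartiteness proposition, rather than trying to read bipartiteness directly off the coefficients of $G$.
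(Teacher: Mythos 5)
Your proof is correct and follows essentially the same route as the paper: both read the theorem off the lemma $a_i = (-1)^{n+i}a_{2n-i}$, use $a_0 = 1 \neq 0$ to pin down the parity of $n$, and invoke Sachs' criterion to identify the vanishing of the odd coefficients with bipartiteness of $H(G)$ (equivalently of $G$). The paper's own argument is just a terser version of this same sign bookkeeping.
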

Since any tree is bipartite, we have,

\begin{corollary}
  \label{Corolario2} The hairing of a  tree is  (anti)palindromic  iff it has (odd)even order. 
\end{corollary}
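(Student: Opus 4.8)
The plan is to obtain this corollary as an immediate specialization of the preceding Theorem, which already settles the question for \emph{arbitrary} graphs: the hairing $H(G)$ is palindromic exactly when $G$ is bipartite and of even order, and antipalindromic exactly when $G$ is bipartite and of odd order. The only fact I need to supply on top of the Theorem is that the bipartiteness hypothesis is vacuous for trees.

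First I would recall the elementary fact that every tree is bipartite: a tree is acyclic, so in particular it contains no odd cycle, and the absence of odd cycles is precisely the classical characterization of bipartite graphs (concretely, a proper $2$-colouring is obtained by colouring each vertex according to the parity of its distance to a fixed root). This is exactly the remark the paper makes immediately before the corollary.

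Second, I would instantiate the Theorem with $G$ equal to a tree $T$. Because $T$ is automatically bipartite, the bipartiteness clause in each of the two equivalences is always satisfied, so the conditions collapse to a parity statement alone: $H(T)$ is palindromic iff $T$ has even order, and antipalindromic iff $T$ has odd order. This is precisely the claim of Corollary~\ref{Corolario2}.

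The one point worth flagging, rather than a genuine obstacle, is to keep track of \emph{which} order the parity refers to. The relevant parity is that of the order $n$ of the underlying tree $T$, not that of $H(T)$, whose order is $2n$ and hence always even; the Theorem's parity condition is indeed a condition on the underlying graph, as one sees from the appearance of the factor $(-1)^{n}$ in Eq.~\eqref{chiH(G)} and in the Lemma. With this bookkeeping settled, there is no difficult step remaining: the corollary is read off from the Theorem the moment bipartiteness of trees is invoked.
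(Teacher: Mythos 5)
Your proof is correct and is exactly the paper's argument: the paper derives this corollary from the preceding Theorem with the single remark ``Since any tree is bipartite,'' which is precisely your specialization. Your added clarification that the parity condition refers to the order $n$ of the underlying tree $T$ (not of $H(T)$, which is always $2n$) is a sound reading of the Theorem and resolves its slight ambiguity correctly.
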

All the previous results in the present section are  based on Eq.~\eqref{chiH(G)} derived from Theorem 2.13 in {\cite{cvetkovic1980spectra}}. Let us give a  direct proof of that equation based on ideas from symplectic matrices.

\subsubsection{Relationship with Symplectic matrices}

We can see the adjacent matrix of $H (G)$ as a kind of symplectic one,
indeed, if $A$ and $A'$ are the adjacent matrices  of $G$ and $H (G)$, respectively then
\[ A'  = \left( \begin{array}{cc}
     A & I\\
     I & 0
   \end{array} \right), \]
with a proper sort of the vertices. Clearly, matrix $A'$ is invertible, and it
has determinant equal to $(- 1)^n$, since we can transform matrix $A'$ into the
matrix
\[ \left( \begin{array}{cc}
     I & A\\
     0 & I
   \end{array} \right), \]
by $n$ column transpositions. Therefore, $A'$ is not symplectic when $n$ is
odd, but it verifies the ``quasisymplectic'' equation $A'JA' = - J$ where $J$ is
the {\emph{canonical matrix}}
\[ J = \left( \begin{array}{cc}
     0 & I\\
     - I & 0
   \end{array} \right) . \]
With these equalities in mind, we can proceed like in Appendix 1 titled ``The characteristic polynomial of a symplectic matrix is palindromic'' in  pag. 300 of
{\cite{zee2016group}}  to prove  Eq.~\eqref{chiH(G)}. 
  First, let us notice that the inverse of $A'$ is
  \begin{equation}\label{A'^-1}
   A'^{- 1} = JA'J,
  \end{equation}
  since $A'JA'J = - JJ
  = I$. Now, let us  compute $\chi_{H(G)} (\lambda)$ in the following way:
  \begin{align*}
    \chi_{H(G)} (\lambda) & = \det (\lambda I - A') 
    = \lambda^{2 n} \det (I - \lambda^{- 1} A') 
    = \lambda^{2 n} \det (A') \det (A'^{- 1} - \lambda^{- 1} I)
     \por{=}{\eqref{A'^-1}} \lambda^{2 n}  (- 1)^n \det (JA'J + \lambda^{- 1} JJ) \\
    &= \lambda^{2 n}  (- 1)^n \det (J) \det (A' + \lambda^{- 1} I) \det (J)
     \hspace{-.2cm}\por{=}{\det(J)^2=(\pm)^2=1}\hspace{-.2cm} \lambda^{2 n}  (- 1)^n  \det (A' -(- \lambda^{- 1}) I) =
    \lambda^{2 n}  (- 1)^n  \det (-((- \lambda^{- 1}) I-A')) \\
    &= \lambda^{2 n} 
    (- 1)^n  (- 1)^{2 n} \chi_{H(G)}  (- \lambda^{- 1})
    = \lambda^{2 n}  (- 1)^n  \chi_{H(G)}  (- \lambda^{- 1}),
  \end{align*}
    as we wanted to prove.

\subsection{Characterization of (anti)palindromic trees}

Now we will prove that the only (anti)palindromic trees are those coming from hairing other trees.

\begin{theorem}\label{main_theorem}
  A tree is palindromic (respectively antipalindromic) iff it is the hairing of a tree with even (resp. odd) order.
\end{theorem}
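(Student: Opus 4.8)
The ``if'' direction is already contained in Corollary~\ref{Corolario2}, so the plan is to prove the ``only if'' direction: every (anti)palindromic tree $T$ is a hairing. Write $n=2m$ for its order, which is even by Corollary~\ref{paltheneven}, and let $M$ be the unique perfect matching guaranteed by Proposition~\ref{PMonPalitrees}. My first step is to reduce the statement to a leaf count. Every leaf is forced to be matched to its unique neighbour, and no vertex can be adjacent to two leaves (both would have to be matched to it); hence, for $T\neq K_2$, the leaves are matched injectively into the internal (degree $\ge 2$) vertices, so $T$ has at most $m$ leaves, with equality \emph{iff} every internal vertex is matched to a leaf. In that case deleting the leaves leaves a subtree $G$ on $m$ vertices to each of whose vertices exactly one leaf is attached, that is, $T=\hair{G}$. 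Thus it suffices to show that an (anti)palindromic tree on $2m$ vertices has exactly $m$ leaves, equivalently that no edge of $M$ joins two internal vertices.

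The bridge to the spectral hypothesis is the pair of coefficients $a_2$ and $a_{2m-2}$. By Eq.~\eqref{formula_tree}, $|a_2|$ is the number of edges, namely $2m-1$, while $|a_{2m-2}|$ is the number $m_{m-1}$ of matchings with $m-1$ edges (near-perfect matchings). Since $T$ is (anti)palindromic we have $|a_2|=|a_{2m-2}|$, so the hypothesis is exactly the identity $m_{m-1}=2m-1$. I would therefore isolate the purely combinatorial fact: a tree with a perfect matching satisfies $m_{m-1}\ge 2m-1$, with equality if and only if it is a hairing.

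To count near-perfect matchings I would use $M$-alternating paths. A matching $M'$ of size $m-1$ is determined by the pair $\{u,w\}$ of vertices it exposes, and such an $M'$ exists precisely when the unique tree path from $u$ to $w$ is $M$-alternating and begins and ends with an $M$-edge; then $M'$ is obtained from $M$ by flipping along that path. The $m$ single-edge paths (one per edge of $M$) give the $m$ trivial near-perfect matchings, and each of the $m-1$ non-matching edges $pq$ produces a distinct length-three path $p'\!-\!p\!-\!q\!-\!q'$, where $p',q'$ are the $M$-partners of $p,q$; this already yields $m_{m-1}\ge 2m-1$. If some edge $ab\in M$ joined two internal vertices, I would choose neighbours $c\neq b$ of $a$ and $d\neq a$ of $b$, with $M$-partners $c',d'$; then $c'\!-\!c\!-\!a\!-\!b\!-\!d\!-\!d'$ is a length-five $M$-alternating path beginning and ending with $M$-edges, and its six vertices are pairwise distinct because any coincidence would force a triangle or a four-cycle, impossible in a tree. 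This extra near-perfect matching gives $m_{m-1}>2m-1$, so equality forces $M$ to have no internal–internal edge.

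Putting the pieces together, the (anti)palindromic hypothesis forces $m_{m-1}=2m-1$, hence no internal–internal matching edge, hence $T=\hair{G}$ with $|G|=m$; the degenerate case $T=K_2=\hair{K_1}$ is handled separately. The parity is then read off from $a_{2m}=(-1)^m$ (one perfect matching): palindromicity gives $(-1)^m=a_0=1$, so $m$ is even, while antipalindromicity gives $m$ odd, matching the statement. The step I expect to be the main obstacle is precisely the equality case of the combinatorial identity---manufacturing the extra near-perfect matching from an internal–internal edge and verifying that its alternating path is genuinely simple---since everything else is bookkeeping with Eq.~\eqref{formula_tree} and the earlier propositions.
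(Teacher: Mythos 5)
Your proof is correct, and its engine is the same as the paper's: both arguments reduce the (anti)palindromic hypothesis to the identity ``number of edges $=$ number of $(m-1)$-matchings'' via Eq.~\eqref{formula_tree}, both obtain the lower bound $2m-1$ from the identical family of matchings ($M-e$ for $e\in M$, and the flip $M - p'p - qq' + pq$ along the length-three path attached to a non-matching edge $pq$), and the extra matching you produce by flipping a length-five alternating path is literally the matching the paper exhibits. The genuine differences lie in how the strict inequality is organized. The paper splits ``$T$ is not a hairing'' into two cases phrased through its $V_+/V_-$ labelling of the matched pairs (a non-matching edge crossing the parts, or non-matching edges inside both parts) and checks a separate construction for each; you first prove, by leaf counting, that ``not a hairing'' is equivalent to the single condition that some edge of $M$ joins two internal vertices, after which one construction suffices and subsumes both of the paper's cases. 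You also route all counting through the bijection between $(m-1)$-matchings and pairs of exposed vertices joined by $M$-alternating paths, which makes every distinctness claim automatic (paths of different lengths have different endpoint pairs, hence the matchings differ), whereas the paper disposes of these claims with ``clearly, all the matchings obtained in these two ways are different.'' The paper's version is shorter and uses no matching-theoretic machinery; yours isolates a cleaner structural characterization (hairing $\Leftrightarrow$ every $M$-edge meets a leaf) and makes the equality case and the bookkeeping airtight. Finally, you read the parity off $a_{2m}=(-1)^m$ directly, where the paper simply re-invokes Corollary~\ref{Corolario2}; either is fine.
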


\begin{proof}
By Corollary~\ref{Corolario2} we need to prove that if $T=(V,E)$ is a(n) (anti)palindromic tree, then it is the hairing of a tree. 
By Proposition~\ref{PMonPalitrees} such a tree $T$ has a unique  perfect matching $M = \{v_1 v_{- 1}, \ldots, v_n v_{- n}
  \}$, i.e. $v_i v_{- i} \in E$ and $V = V_+ \cup V_-$ with $V_+ =
  \{v_1, \ldots, v_n \}$ and $V_- = \{v_{- 1}, \ldots, v_{- n} \}$.

Let us suppose, for the sake of contradiction, that $T$ is not the hairing of another tree. Then, we will prove that the number of matchings of cardinality $n - 1$ is greater than the number of edges of $T$, so $|a_2 | > |a_{2n - 2} |$ contradicting the (anti)palindromic property.

  First, let us prove that $|a_2 | \geq |a_{2n - 2} |$, by showing a different
  $(n - 1)$-matching for each edge of the graph. Indeed, for each edge $e$ in
  $M$ the set $M - e$ is a $(n - 1)$-matching. On the other hand, if $e$ is an
  edge not in $M$, then it is adjacent with exactly two edges $f$ and $g$ in $M$, so
  $M - f - g + e$ is a $(n - 1)$-matching, as well. Clearly, all the matchings
  obtained in these  two ways are different.
  
  Now, we will prove the strictness of the inequality, under the hypothesis
  that $T$ is not a hairing of a tree.  Notice that if $T$ is the hairing of a tree, then $M$ should consist exactly of the edges incident with vertices of degree 1 of that hairing. Therefore,  if $T$ is not a hairing,  there will be only two possibilities: 
  \begin{enumerate}
\item  either there is an edge not in $M$ but  with its ends in $V_+$ and $V_-$, or
\item there are edges between vertices in $V_+$ and edges between  vertices in $V_-$ at the same time.
\end{enumerate}
In the first case, we can suppose w.l.o.g. that $v_1 v_{- 2}$ and $v_2 v_3$ are  edges of $T$, see left hand side of  Fig.~\ref{fig:T2}. 
In the second case, by the connectivity of $T$  we can suppose w.l.o.g. that both $v_1 v_2$ and $v_{-2} v_{-3}$ are edges of $T$, illustrated in the right hand side of  Fig.~\ref{fig:T2}.
In  case (1) , the set $M - v_1 v_{- 1} + v_1 v_{- 2} - v_{- 2} v_2 + v_2 v_3 - v_3 v_{- 3}$ is a \mbox{$(n - 1)$-matching} different from those constructed before.
For case (2),  the set 
$M - v_1 v_{- 1} - v_2 v_{- 2} - v_3 v_{- 3} + v_1 v_2 + v_{- 2} v_{- 3}$ is a 
$(n - 1)$-matching different from those constructed before as well, thus the inequality is strict. 
 \end{proof}
  \newcommand{\matching}[1]{
  \begin{tikzpicture}
      [scale=0.8,place/.style={circle,draw=black,thick,fill=black, inner sep=0pt,minimum size=1mm}]

    \foreach \x in {1,2,3}{
      \node (\x) at ( 0, 4-\x) [place] {};
      \node  at ( 0, 4-\x) [left] {$v_{ \x}$};
      \node (\x') at ( 4, 4-\x) [place] {};
      \node  at ( 4, 4-\x) [right] {$v_{  -\x}$};
    };
      \node (n) at ( 0, -0.2) [place] {};
      \node at ( 0, -0.2) [left] {$v_{ n}$};
      \node (n') at ( 4, -0.2) [place] {};
      \node  at ( 4,-0.2) [right] {$v_{  -n}$};
     \node at ( 2, 0.5)  {$\vdots$};
    
    \foreach \x in {1,2,3}
	  \draw (\x) to (\x');
	 \draw (n) to (n'); 
	#1  
    \end{tikzpicture}
   } 
\begin{figure}[ht]
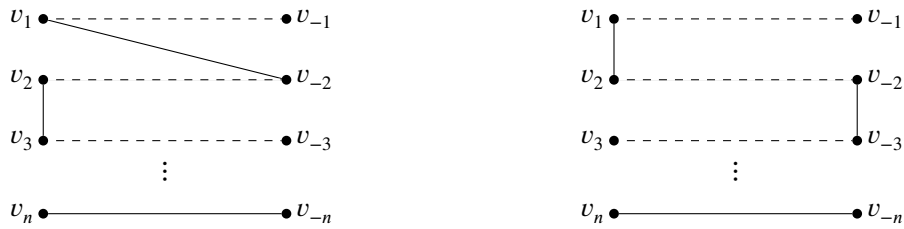

    \centering
        \begin{tabular}{ccc}
    \matching{	
\draw (1)--(2'); 
\draw (2)--(3); 
\draw[draw=white,dashed, thick] (1)--(1'); \draw[draw=white,dashed, thick] (2)--(2'); \draw[draw=white,dashed, very thick] (3)--(3'); 
}         &\mbox{} \hspace{2cm}\mbox{}& 
\matching{	\draw (1) to (2);
\draw (2') to (3');
\draw[draw=white,dashed, thick] (1)--(1'); \draw[draw=white,dashed, thick] (2)--(2'); \draw[draw=white,dashed, very thick] (3)--(3'); 
}
        \end{tabular}
    
    \caption{Cases (1) and (2) of Theorem~\ref{main_theorem}'s proof.}
    \label{fig:T2}
\end{figure}   
 
\begin{corollary}
  Every (anti)palindromic tree has  order multiple of four (plus two).
\end{corollary}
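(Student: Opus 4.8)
The plan is to deduce this immediately from Theorem~\ref{main_theorem}, so the work is purely a parity count. First I would pin down the phrasing of the statement: I read ``multiple of four (plus two)'' as asserting that a \emph{palindromic} tree has order divisible by four, whereas an \emph{antipalindromic} tree has order congruent to two modulo four.

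By Theorem~\ref{main_theorem}, any (anti)palindromic tree $T$ is a hairing, $T=\hair{T'}$, for some tree $T'$; moreover $T'$ has even order in the palindromic case and odd order in the antipalindromic case. The only additional ingredient is the elementary observation that hairing doubles the number of vertices: since the vertex set of $\hair{T'}$ is $V(T')\times\{0,1\}$, we have $|V(T)|=2\,|V(T')|$.

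Combining these two facts finishes the argument. In the palindromic case $|V(T')|$ is even, say $|V(T')|=2m$, whence $|V(T)|=4m$; in the antipalindromic case $|V(T')|=2m+1$ is odd, whence $|V(T)|=4m+2$. This yields exactly the two residue classes claimed. (Equivalently, one could read the parity directly off the Lemma: with $n=|V(T')|$ one has $a_0=(-1)^n a_{2n}$, so $a_{2n}=1$ forces $n$ even and $a_{2n}=-1$ forces $n$ odd, and the order $2n$ of $T$ is then $0$ or $2$ modulo four accordingly.)

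There is essentially no obstacle beyond bookkeeping, since the entire content is already carried by the main theorem together with the doubling of order under hairing. The only point that demands care is matching the two cases correctly---palindromic with $0 \bmod 4$ and antipalindromic with $2 \bmod 4$---which I would sanity-check against Table~\ref{tab:first palindromic trees}, where, for instance, $K_2$ is antipalindromic of order $2$ while the order-$4$ tree with coefficients $1,-3,1$ is palindromic.
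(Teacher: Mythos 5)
Your proof is correct and is exactly the argument the paper intends: the corollary is stated without proof precisely because it follows immediately from Theorem~\ref{main_theorem} together with the observation that hairing doubles the order, with the parenthetical cases matched just as you have them (palindromic $\Rightarrow 4m$, antipalindromic $\Rightarrow 4m+2$).
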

By our computer searches  we think this sentence is valid for any graph.
\begin{conjecture}
   Every (anti)palindromic graph has  order multiple of four (plus two).
\end{conjecture}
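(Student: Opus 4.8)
The plan is to collapse the mod-$4$ statement into a single spectral invariant and then prove that invariant is divisible by $4$. First I would fix notation: by Corollary~\ref{paltheneven} the order is even, so write $n=2m$; by the definition of the two types, $a_n=1$ in the palindromic case and $a_n=-1$ in the antipalindromic case. Since $a_n=\chi_G(0)=\det(-A_G)=(-1)^n\det A_G=\det A_G$ (using $n$ even), the entire conjecture is equivalent to the single identity
\[
\det A_G=(-1)^{m},
\]
because for a palindromic graph $\det A_G=1$ forces $(-1)^m=1$, i.e. $n\equiv 0\pmod 4$, while for an antipalindromic graph $\det A_G=-1$ forces $(-1)^m=-1$, i.e. $n\equiv 2\pmod 4$.

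Next I would exploit the inverse symmetry of the spectrum. As recorded in the excerpt, $\Spec{\chi_G}=\Spec{\chi_G}^{-1}$, and since $A_G$ is real symmetric it is diagonalizable, so its eigenvalues split into reciprocal pairs $\{\lambda,\lambda^{-1}\}$ together with self-paired eigenvalues $\pm 1$. Each reciprocal pair multiplies to $1$ and consists of two numbers of equal sign, hence contributes $+1$ to $\det A_G$ and an even amount to each of $n_+,n_-$ (the numbers of positive and negative eigenvalues). Writing $p,q$ for the multiplicities of $+1,-1$, one gets $\det A_G=(-1)^q$, so the target identity becomes the parity statement $m\equiv q\pmod 2$. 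Using $n_-=q+2k_-$ (with $k_-$ the number of negative reciprocal pairs) and $m=\tfrac12(n_++n_-)$, a one-line computation gives
\[
m-q\equiv \tfrac12\,(n_+-n_-)\pmod 2,
\]
so the conjecture is equivalent to the clean assertion that the signature $\sigma(A_G)=n_+-n_-$ satisfies $\sigma(A_G)\equiv 0\pmod 4$ (note $\sigma$ is even since $n$ is).

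With this reformulation the bipartite case is immediate, and I would record it first: a bipartite graph has spectrum symmetric about $0$, so $n_+=n_-$ and $\sigma(A_G)=0$. This already proves the conjecture for \emph{every} bipartite (anti)palindromic graph, recovers the tree corollary as a special case, and explains why the tree argument through the unique perfect matching succeeds: there the signature vanishes for free.

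The hard part will be the non-bipartite case, namely forcing $4\mid \sigma(A_G)$ when the spectrum is genuinely asymmetric. I would attack this on two fronts. The combinatorial front reads \eqref{formula} at $i=n$: a spanning subgraph whose components are edges or cycles contributes $(-1)^{c}2^{s}$, so the perfect matchings contribute $(-1)^m P$ (with $P$ their number, odd since $a_n$ is odd by Proposition~\ref{PMonPaliGraphs} and the matching terms dominate $a_n$ modulo $2$), single-cycle spanning covers contribute $\pm 2$, and covers with at least two cycles vanish modulo $4$; this yields $a_n\equiv (-1)^m P+2N_1\pmod 4$ and reduces the obstruction to controlling $P\bmod 4$ together with the signed one-cycle count $N_1\bmod 2$. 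The structural front would instead refine the reciprocal pairing into a skew/symplectic-type bilinear form on the $\pm1$-eigenspaces, in the spirit of the quasisymplectic identity $A'JA'=-J$ used earlier, hoping to pin $\sigma(A_G)$ into $4\mathbb{Z}$ directly. I expect this non-bipartite step to be the genuine obstacle: the (anti)palindromic hypothesis fixes $|a_n|=1$ and the parity of $q$, but it does not visibly constrain either the odd cycles' contributions modulo $4$ or the signature beyond its evenness, which is exactly why the statement is posed here as a conjecture rather than a theorem.
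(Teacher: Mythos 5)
The statement you were given is not proved in the paper at all: the authors state it as a conjecture supported by computer search, and prove the mod-$4$ property only for trees, as a corollary of Theorem~\ref{main_theorem} (an (anti)palindromic tree is the hairing of a tree of even/odd order $k$, hence has order $2k\equiv 0$ or $2\pmod 4$). Measured against that, your reduction chain is correct and genuinely goes further than the paper on one front. Indeed, $|a_n|=|a_0|=1$ rules out zero eigenvalues; $\Spec{G}=\Spec{G}^{-1}$ pairs eigenvalues reciprocally with equal signs, so $\det A_G=(-1)^q$ with $q$ the multiplicity of $-1$; since $a_n=\chi_G(0)=\det A_G$ for even $n$, the conjecture is equivalent to $q\equiv m\pmod 2$, and your computation $m-q\equiv\frac12(n_+-n_-)\pmod 2$ correctly recasts it as $\sigma(A_G)\equiv 0\pmod 4$. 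In the bipartite case the spectrum is symmetric about the origin and has no zero eigenvalues, so $n_+=n_-$ and $\sigma(A_G)=0$: this settles the conjecture for \emph{all} bipartite (anti)palindromic graphs, which strictly contains the paper's tree corollary and is a result the paper does not state.

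However, as a proof of the conjecture itself your proposal has a genuine gap, which you acknowledge: the non-bipartite case. That case is non-vacuous --- the paper exhibits non-bipartite palindromic graphs of order $8$ (the 5th and 9th graphs in Figure~\ref{palindromic n=8}), and its tensor powers of the 9th give non-bipartite palindromic bald graphs of order $8^k$ --- and neither of your two ``fronts'' is an argument there. On the combinatorial front, reading Eq.~\eqref{formula} at $i=n$ modulo $4$ gives, as you say, $a_n\equiv(-1)^mP+2N_1\pmod 4$; since $a_n$ and $(-1)^m$ both lie in $\{\pm1\}$ and differ by $2$ modulo $4$ when unequal, closing the conjecture this way requires exactly $P\equiv 1+2N_1\pmod 4$, while the (anti)palindromic hypothesis yields only that $P$ is odd (the content of Proposition~\ref{PMonPaliGraphs}); nothing constrains $P\bmod 4$ or the signed one-cycle count $N_1\bmod 2$. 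On the structural front, the identity $A'JA'=-J$ in the paper is derived from the specific block form of the adjacency matrix of a hairing; a general (anti)palindromic adjacency matrix comes with no such $J$, so there is no candidate bilinear form with which to pin $\sigma(A_G)$ into $4\mathbb{Z}$. The honest summary: a correct equivalent reformulation plus a complete proof of the bipartite sub-case, but not a proof of the statement --- consistent with the fact that the authors themselves could only pose it as a conjecture.
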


\section{General case}

In general there exist antipalindromic and palindromic graphs which are not the hairing of another graph. In Figures~\ref{palindromic n=6} and \ref{palindromic n=8} we show the first of them. Moreover, we can note that there are palindromic graphs with no vertex of degree 1 at all.
In this section we will show how to produce new palindromic graphs through the \emph{tensor product} of two graphs. Given two graphs $G=(V,E)$ and $G'=(V',E')$, the tensor o Kronecker product $G\otimes G'$ of $G$ with $G'$ has vertices $V\times V'$ and edges $\{(u,u')(v,v'):  uv\in E,u'v'\in E'\}$.  Figure~\ref{fig:P_4xP_4} shows the tensor product of the 4-path with  itself. 
Moreover, we will see that this product in general destroys the hairing condition. By considering powers of a graph with itself, we can obtain infinite sequences of palindromic graphs with some properties like the complete  absence of vertices of degree 1. 
Although the tensor product of bipartite graphs produces disconnected ones, taking one connected component  will ultimately be useful to obtain   sequences with a  wider range of orders.

Let us say that a vertex is a \emph{hair} if it has degree 1, and  denote $V^H$ the set of hairs adjacent to a vertex of $V$. 
In an abuse of notation let us denote $G^H$ the hairs of graph $G$. We will say that a graph $G$ is \emph{bald} if it has no hairs, i.e., if $G^H = \emptyset$.

\newcommand{\poligono}[3]{
    \foreach \x in {1,...,#2}{
      \node (\x) at (90+ \x*#3:#1) [place] {};
    };
    \foreach \x in {1,...,#2}
	  \draw (90+ \x*#3:#1) to (90+ \x*#3+#3:#1);
	 
}
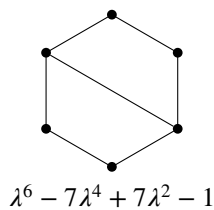
\begin{figure}[ht]
    \centering
    \begin{tabular}{c}
\begin{tikzpicture}
[scale=1,place/.style={circle,draw=black,thick,fill=black, inner sep=0pt,minimum size=1mm}]
\poligono{1}{6}{60};
\draw (1) -- (4); 
\end{tikzpicture}\\
$\lam^6-7\lam^4+7\lam^2-1$ 
    \end{tabular}
    \caption{The unique non hairing antipalindromic graph of order 6.}
    \label{palindromic n=6}
\end{figure}

\newcommand{\escala}{0.9}
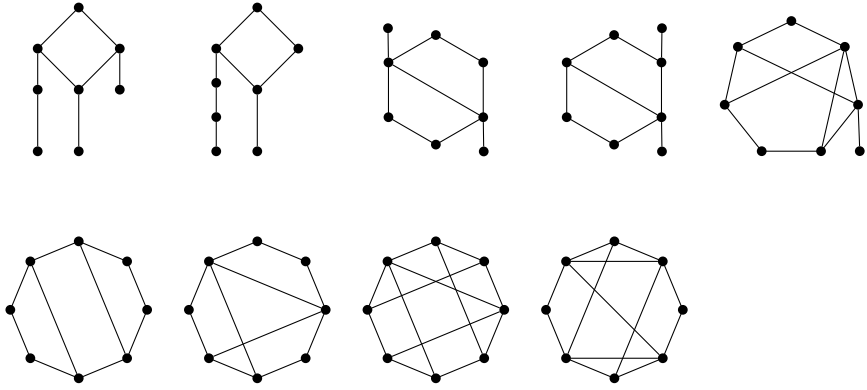
\begin{figure}[ht]
    \centering
    \begin{tabular}{ccccc}
    
\begin{tikzpicture}
[scale=\escala,place/.style={circle,draw=black,thick,fill=black, inner sep=0pt,minimum size=1mm}]
\poligono{0.6}{4}{90};
\node (5) at ( -0.6,-.6) [place] {};
\node (6) at (-.6,-1.5) [place] {};
\node (7) at ( 90+ 2*90:1.5) [place] {};
\node (8) at ( .6, -.6) [place] {};
\draw (1) -- (5)--(6);
\draw (2) -- (7); 
\draw (3) -- (8); 
\end{tikzpicture}& \begin{tikzpicture}
[scale=\escala,place/.style={circle,draw=black,thick,fill=black, inner sep=0pt,minimum size=1mm}]
\poligono{.6}{4}{90};
\node (5) at ( -.6, -0.5) [place] {};
\node (6) at ( -.6, -1.) [place] {};
\node (7) at ( -.6, -1.5) [place] {};
\node (8) at ( 90+ 2*90:1.5) [place] {};
\draw (1) -- (5)--(6)--(7);
\draw (2) -- (8); 
\end{tikzpicture}&    
    
\begin{tikzpicture}
[scale=\escala,place/.style={circle,draw=black,thick,fill=black, inner sep=0pt,minimum size=1mm}]
\poligono{.8}{6}{60};
\draw (1) -- (4); 
\node (7) at ( .7, -.9) [place] {};
\node (8) at ( -.7, .9) [place] {};
\draw (4) -- (7);
\draw (1) -- (8); 
\end{tikzpicture}&
\begin{tikzpicture}
[scale=\escala,place/.style={circle,draw=black,thick,fill=black, inner sep=0pt,minimum size=1mm}]
\poligono{.8}{6}{60};
\draw (1) -- (4); 
\node (7) at ( .7, -.9) [place] {};
\node (8) at (.7, .9) [place] {};
\draw (4) -- (7);
\draw (5) -- (8); 
\end{tikzpicture}&
\begin{tikzpicture}
[scale=\escala,place/.style={circle,draw=black,thick,fill=black, inner sep=0pt,minimum size=1mm}]
\poligono{1}{7}{51.4285};
 \node (8) at (1, -.9) [place] {};
\draw (5) -- (1);
\draw (6) -- (2);
\draw (4) -- (6);

\draw (5) -- (8);
\end{tikzpicture}\\
% {\tiny$(-8,0,-16)$ }
% &{\tiny$(-8,0,15)$ }
% &{\tiny$(-9,0,16)$ }
% &{\tiny$(-9,0,17)$ }
% &{\tiny$(-11,-2,24)$ }
\\&&\\
\begin{tikzpicture}
[scale=\escala,place/.style={circle,draw=black,thick,fill=black, inner sep=0pt,minimum size=1mm}]
\poligono{1}{8}{45};
 \draw (1) -- (4);
\draw (5) -- (8);
\end{tikzpicture}&
\begin{tikzpicture}
[scale=\escala,place/.style={circle,draw=black,thick,fill=black, inner sep=0pt,minimum size=1mm}]
\poligono{1}{8}{45};
 \draw (1) -- (4);
\draw (3) -- (6);
\draw (6) -- (1);
\end{tikzpicture}&
\begin{tikzpicture}
[scale=\escala,place/.style={circle,draw=black,thick,fill=black, inner sep=0pt,minimum size=1mm}]
\poligono{1}{8}{45};
 \draw (1) -- (4);
\draw (3) -- (6);
\draw (6) -- (1);
\draw (2) -- (7);
\draw (8) -- (5);
\end{tikzpicture}&
\begin{tikzpicture}
[scale=\escala,place/.style={circle,draw=black,thick,fill=black, inner sep=0pt,minimum size=1mm}]
\poligono{1}{8}{45};
 \draw (1) -- (5);
\draw (3) -- (5);
\draw (3) -- (8);
\draw (1) -- (7);
\draw (4) -- (7);
\end{tikzpicture}\\
% {\tiny$(-10,0,23)$ }
% &{\tiny$(-11,0,21)$ }
% &{\tiny$(-13,0,24)$ }
% &{\tiny$(-13,-4,32)$} 
\\&&
\end{tabular}
     \caption{The nine non hairing palindromic graphs of order 8.}
    %  , and  coefficients $(a_2,a_3,a_4)$ of its characteristic polynomial $\lam^8+a_2\lam^6+a_3\lam^5+a_4\lam^4+a_3\lam^3+a_2\lam^2+1$.
    % }
    \label{palindromic n=8}
\end{figure}

\begin{figure}[ht]
    \centering
    \begin{tabular}{ccc}

\begin{tikzpicture}
[scale=0.7,place/.style={circle,draw=black,thick,fill=black, inner sep=0pt,minimum size=1mm}]
\poligono{1}{5}{72};
\draw (2) -- (4); 
\node (6) at ( 90+ 1*72:1.5) [place] {};
\node (7) at ( 90+ 2*72:1.5) [place] {};
\node (8) at ( 90+ 3*72:1.5) [place] {};
\draw (1) -- (6);
\draw (2) -- (7); 
\draw (3) -- (8); 
\end{tikzpicture}&
\begin{tikzpicture}
[scale=0.7,place/.style={circle,draw=black,thick,fill=black, inner sep=0pt,minimum size=1mm}]
\poligono{1}{7}{51.4285};
 \node (8) at ( 90-2*51.4285:1.5) [place] {};
\draw (5) -- (8);
\draw (4) -- (7)--(3); 
\end{tikzpicture}&
\begin{tikzpicture}
[scale=0.7,place/.style={circle,draw=black,thick,fill=black, inner sep=0pt,minimum size=1mm}]
\poligono{1}{8}{45};
\draw (7)--(2)--(5)--(1) -- (4)--(7); 
\draw (4) -- (8)--(5); 
\end{tikzpicture}\\
{\small$\lam^8-9\lam^6-2\lam^5-18\lam^4+2\lam^3-9\lam^2+1$}&
{\small$\lam^8-10\lam^6-2\lam^5-23\lam^4+2\lam^3-10\lam^2+1$}&
{\small$\lam^8-15\lam^6-12\lam^5+25\lam^4+12\lam^3-15\lam^2+1$}\\
    \end{tabular}
     \centering
    \caption{Three out of 17 absolutely  palindromic graphs of order 8.}
    \label{absolutely palindromic}
\end{figure}

\begin{proposition}\label{prop:KP}
The tensor product of two palindromic or antipalindromic
  graphs is always palindromic. 
\end{proposition}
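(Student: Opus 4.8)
The plan is to use the spectral characterization recalled just before Eq.~\eqref{palindromic}: a graph $H$ of order $N$ is palindromic iff $\Spec{\chi_H}=\Spec{\chi_H}^{-1}$ as multisets and its constant coefficient $a_N$ equals $1$. So I would verify these two conditions for $H=G\otimes G'$, where $G=(V,E)$ and $G'=(V',E')$ have orders $n=|V|$ and $m=|V'|$. The starting observation is that the adjacency matrix of the tensor product is the Kronecker product of the adjacency matrices, $A_{G\otimes G'}=A_G\otimes A_{G'}$, which is immediate from the definition of the edge set: the entry indexed by $(u,u'),(v,v')$ is $(A_G)_{uv}(A_{G'})_{u'v'}$. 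Consequently the spectrum of $G\otimes G'$ is the multiset of all products $\{\lambda\mu:\lambda\in\Spec{\chi_G},\ \mu\in\Spec{\chi_{G'}}\}$, each taken with the product of the two multiplicities.

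First I would establish the inversion symmetry of $\Spec{\chi_{G\otimes G'}}$. Since $G$ and $G'$ are (anti)palindromic we have $|a_n|=|a_0|=1$ for each, so $\det A_G\neq 0\neq\det A_{G'}$; hence neither spectrum contains $0$ and inversion is well defined on both. By the characterization each spectrum is invariant under $x\mapsto x^{-1}$ as a multiset, so I may fix bijections $\sigma,\tau$ of the two index sets realizing these symmetries, i.e.\ $\lambda_{\sigma(i)}=\lambda_i^{-1}$ and $\mu_{\tau(j)}=\mu_j^{-1}$. Then $(i,j)\mapsto(\sigma(i),\tau(j))$ is a bijection of index pairs sending $\lambda_i\mu_j$ to $\lambda_i^{-1}\mu_j^{-1}=(\lambda_i\mu_j)^{-1}$, which shows $\Spec{\chi_{G\otimes G'}}=\Spec{\chi_{G\otimes G'}}^{-1}$ as multisets, regardless of whether the factors are palindromic or antipalindromic.

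It then remains to compute $a_N$ with $N=nm$. Using $a_N=(-1)^N\det(A_G\otimes A_{G'})$ together with the Kronecker determinant identity $\det(A_G\otimes A_{G'})=(\det A_G)^m(\det A_{G'})^n$, I obtain $a_N=(-1)^{nm}(\det A_G)^m(\det A_{G'})^n$. Here Corollary~\ref{paltheneven} is crucial: both $n$ and $m$ are even, so $(-1)^{nm}=1$; moreover $\det A_G=(-1)^n a_n=\pm1$ and likewise $\det A_{G'}=\pm1$, so the powers $(\det A_G)^m$ and $(\det A_{G'})^n$ are each $(\pm1)^{\mathrm{even}}=1$. Hence $a_N=1$, both conditions hold, and $G\otimes G'$ is palindromic. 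The only delicate points are the bookkeeping of multiplicities in the inversion argument and the parity accounting for $a_N$; it is precisely the evenness of the orders that forces every sign to collapse to $+1$, so that the product comes out genuinely palindromic (and never merely absolutely palindromic or antipalindromic) even when one or both factors are antipalindromic.
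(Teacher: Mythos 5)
Your proof is correct and takes essentially the same route as the paper: both use the spectral characterization of (anti)palindromic polynomials, derive inversion-invariance of $\Spec{G\otimes G'}$ from the Kronecker structure of the adjacency matrix, and then rule out the antipalindromic alternative by showing the constant coefficient equals $1$ via the evenness of both orders (Corollary~\ref{paltheneven}). Your only departures are cosmetic: explicit bijections for the multiset bookkeeping, and the identity $\det(A_G\otimes A_{G'})=(\det A_G)^m(\det A_{G'})^n$ in place of the paper's equivalent computation with products of eigenvalues.
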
  
\begin{proof}
  
Indeed, the adjacent matrices of the tensor
  product is the tensor product of the adjacent matrices, but the last has
  as spectrum the product of the spectra of its factors, i.e., if $\{\lambda_1, \ldots,
  \lambda_n \}$ and $\{\mu_1, \ldots, \mu_m \}$ are the spectra of matrices $M_1$ and $M_2$, then
  $\{\lambda_i \mu_j : i = 1, \ldots, n, j = 1, \ldots, m\}$ is the spectrum
  of the tensor product of $M_1$ and $M_2$. Since a graph is palindromic or antipalindromic  iff $\Spec{G}:=\Spec{A_G}=\Spec{G}^{-1}$, then if $G_1$ and $G_2$ are palindromic or antipalindromic, we have 
  
  \begin{align*}
    \Spec{G_1 \otimes G_2}^{- 1} & 
    = \{(\lambda \mu)^{- 1} : \lambda \in \Spec{G_1}, \mu \in \Spec{G_2}\}\\
    & = \{\lambda^{- 1} \mu^{- 1} : \lambda \in \Spec{G_1}, \mu \in \Spec{G_2}\}\\
    & = \{\lambda \mu : \lambda^{- 1} \in \Spec{G_1}, \mu^{- 1} \in \Spec{G_2}\}\\
    & = \{\lambda \mu : \lambda \in \Spec{G_1}^{- 1}, \mu \in \Spec{G_2}^{-
    1} \}\\
    & = \{\lambda \mu : \lambda \in \Spec{G_1}, \mu \in \Spec{G_2}\}\\
    & = \Spec{G_1 \otimes G_2}.
  \end{align*}
  Therefore, $G_1\otimes G_2$ is palindromic or antipalindromic.
  In order to discard the antipalindromic option, we need to prove that the   independent term $a_{nm}$ of $\chi_{ G_1 \otimes G_2}$ is $1$ instead of $-1$. 
  But $a_{nm}$  is equal to the evaluation of $\chi_{ G_1 \otimes G_2}$  in 0, as well as  to the product of the opposite of its eigenvalues, i.e.
   $a_{nm}=\chi_{ G_1 \otimes G_2}( 0) =   \prod_{\tau \in \Spec {G_1\otimes G_2} }
  (-\tau)$, so
  
  \begin{align*}
     \chi_{ G_1 \otimes G_2}( 0) & = (- 1)^{|G_1 | |G_2 |} \prod_{\tau \in
    \Spec{G_1 \otimes G_2}} \tau \por{=}{|G_1 |,|G_2 | \text{ even}} \prod_{\lam \in \Spec{G_1}}\prod_{ \mu \in 
    \Spec{G_2}} \lam \mu
    =\prod_{\lam \in \Spec{G_1}}\lam^{|G_2|}
    \prod_{ \mu \in \Spec{G_2}}\mu\\
     &= \left(
    \prod_{\mu \in \Spec{G_2}} \mu \right)^{|G_1 |} \left( \prod_{\lam \in \Spec{G_1}} \lam \right)^{|G_2 |}
     =   
    (\pm 1)^{|G_1 |}  (\pm1)^{|G_2 |} \por{=}{|G_1 |,|G_2 | \text{ even}} 1.
  \end{align*}  
\end{proof}
Naturally, one  wonders if  the converse of this result is true as well as if the graphs obtained in this way  are  different graphs  from those obtained by hairing. The former question is false, as the counterexample in  Figure~\ref{fig:CounterExample} proves, while the latter is answered positively below.

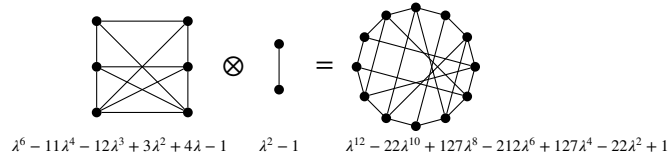
\begin{figure}[ht]
    \centering
    \begin{tikzpicture}
      [scale=.6,place/.style={circle,draw=black,thick,fill=black, inner sep=0pt,minimum size=1mm}]

    \foreach \x in {1,2,3}{
      \node (\x) at  (0, \x) [place] {};
      \node (\x') at (2, \x) [place] {};
    };
    \foreach \x in {1,2,3}
      \draw (\x) --  (\x');
    \foreach \x/\y in {1/2,2/3}{
      \draw (\x) --  (\y);
      \draw (\x') --  (\y');
};
  \foreach \x/\y in {1/2,1/3}{
      \draw (\x) --  (\y');
      \draw (\x') --  (\y);
};
      
\node[scale=0.6] at (.5,0.3) {$\lam^6-11\lam^4-12\lam^3+3\lam^2+4\lam-1$};      
\node[scale=0.6] at (4,0.3) {$\lam^2-1$};      
\node[scale=0.6] at (9,0.3) {$\lam^{12}-22\lam^{10}+127\lam^8-212\lam^6+127\lam^4-22\lam^2+1$};      
      
    \node at (3,2) {$\otimes$};
    \foreach \x in {1,2}
      \node (\x'') at  (4, \x+.5) [place] {};
    \draw (1'') --  (2'');
    \node at (5,2) {$=$};
    \begin{scope}[xshift = 7 cm, yshift = 2 cm]
    \poligono{1.3}{12}{30}
    \draw (1) --  (8)--(3);
    \draw (2) --  (9)--(4)--(1);
    \draw (5) --  (12)--(7)--(10)--(5);
    \draw (6) --  (11);
    \end{scope}

    \end{tikzpicture} 
    \caption{Counterexample to the converse of Proposition~\ref{prop:KP}}
    \label{fig:CounterExample}
\end{figure}

Let us first remember some  properties of the tensor's  product of graphs.
\begin{proposition}\label{Prop:KroenBip}
Let $G$ be the tensor's product  of two connected graphs $G_1$ and $G_2$. Then 
\begin{enumerate}
    \item The degree of a vertex $(u,v)$ of $G$ is the product of the degrees of $u$ and $v$. 
    \item A vertex $(u,v) \in G^H$  iff both $u\in G_1^H$ and $v \in G_2^H$.
    \item  $G$ is connected iff one of $G_i$  is not bipartite.
    \item  $G$ is bipartite iff one of $G_i$ is bipartite.
    \item If both $G_i$ are bipartite, then $G$  has exactly two (bipartite) connected components.  Moreover, if $G_1$ and  $G_2$  have  parts  $( V_1 , W_1)$ and $( V_2 , W_2)$, respectively, then $G$ has one connected component with parts $(V_1 \times V_2, W_1 \times W_2)$ and the other component with parts $(V_1 \times W_2, W_1 \times V_2)$.
    \end{enumerate}   
    \qed
\end{proposition}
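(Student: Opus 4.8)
The plan is to reduce everything to a single observation: a walk of length $\ell$ in $G = G_1 \otimes G_2$ is exactly a pair of walks of length $\ell$, one in $G_1$ and one in $G_2$, synchronised step by step. That is, $(u_0,v_0),(u_1,v_1),\dots,(u_\ell,v_\ell)$ is a walk in $G$ iff $u_0 u_1 \cdots u_\ell$ is a walk in $G_1$ and $v_0 v_1 \cdots v_\ell$ is a walk in $G_2$. Items (1) and (2) are then immediate from the base case $\ell=1$: the neighbours of $(u,v)$ are precisely the pairs $(u',v')$ with $u' \sim u$ in $G_1$ and $v' \sim v$ in $G_2$, so $\deg_G(u,v) = \deg_{G_1}(u)\,\deg_{G_2}(v)$, and a product of two nonnegative integers equals $1$ iff both factors are $1$, which says exactly that $(u,v)\in G^H$ iff $u\in G_1^H$ and $v\in G_2^H$.

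For item (4) I would invoke the characterisation that a connected graph is bipartite iff it has no closed walk of odd length. If some $G_i$ is bipartite, then every closed walk in $G$ projects to a closed walk of the same length in $G_i$, which must be even; hence $G$ has no odd closed walk and is bipartite. Conversely, if both factors are non-bipartite, I pick odd closed walks of lengths $a$ in $G_1$ and $b$ in $G_2$; traversing the first $b$ times and the second $a$ times produces closed walks of the common odd length $ab$, which pair up to an odd closed walk in $G$, so $G$ is not bipartite.

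Items (3) and (5) are the heart of the matter and I would treat them together through a \emph{length-matching} lemma: two vertices $(u_1,v_1)$ and $(u_2,v_2)$ lie in the same component of $G$ iff there is an integer $\ell$ that is simultaneously the length of a walk from $u_1$ to $u_2$ in $G_1$ and of a walk from $v_1$ to $v_2$ in $G_2$. In any connected graph the set of achievable walk-lengths between two fixed vertices contains all sufficiently large integers of the appropriate parity, and contains all sufficiently large integers of \emph{both} parities precisely when the graph is non-bipartite (one detours through an odd cycle to flip the parity). Hence, if at least one $G_i$ is non-bipartite, the two length-sets intersect for every choice of endpoints and $G$ is connected; if both $G_i$ are bipartite, the sets intersect iff the distance-parities agree, i.e. iff $\sigma_1(u_1)+\sigma_2(v_1) \equiv \sigma_1(u_2)+\sigma_2(v_2) \pmod 2$, where $\sigma_i$ records the side of the bipartition of $G_i$. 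This quantity is invariant along every edge and takes exactly two values, giving exactly two components; reading off which side-pairs realise each value yields the stated parts $(V_1\times V_2,\, W_1\times W_2)$ and $(V_1\times W_2,\, W_1\times V_2)$.

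The main obstacle is precisely this walk-length lemma underlying (3) and (5): the parity invariant $\sigma_1+\sigma_2 \bmod 2$ shows easily that $G$ has \emph{at most} two components, but proving that each class is \emph{genuinely} connected requires producing, for any two endpoints in the same class, walks of a \emph{common} length in the two factors. This forces me to control not merely the existence but the whole set of possible walk-lengths between two vertices, including the parity flip that is available only in the non-bipartite case. Once that lemma is established, all of (3), (4) and (5) — which together amount to Weichsel's theorem plus the explicit description of the two bipartitions — follow uniformly.
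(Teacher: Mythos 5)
Your proof is correct, but note that the paper never proves this proposition at all: it is stated with a closing \qed{} and no proof, introduced by ``Let us first remember some properties of the tensor's product of graphs'' --- items (3)--(5) are essentially Weichsel's classical theorem on connectedness of tensor products, and (1)--(2) are immediate from the definition. So your argument is not an alternative to the paper's proof so much as a self-contained substitute for a citation. What your route buys is uniformity: everything reduces to the single observation that a walk in $G_1\otimes G_2$ is a synchronised pair of walks in the factors, plus the length-matching lemma controlling the set of walk lengths between two fixed vertices (all sufficiently large integers of one parity in the connected bipartite case, of both parities in the non-bipartite case, via a detour around an odd cycle). That lemma is exactly the right engine: the invariant $\sigma_1+\sigma_2 \bmod 2$ shows there are \emph{at most} two classes, and the length-matching shows each class is \emph{genuinely} connected, which is the step usually glossed over. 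Two small points to tidy up if you write this in full: (i) the backtracking step that adds $2$ to a walk length requires the walk (or the ambient component) to contain an edge, and the whole statement silently assumes both factors have at least one edge --- if some $G_i$ is $K_1$, the product is edgeless and items (1)--(3) degenerate; the paper makes the same implicit assumption. (ii) In item (5) you should also record explicitly that every edge of $G$ joins $V_1\times V_2$ to $W_1\times W_2$ (respectively $V_1\times W_2$ to $W_1\times V_2$), because an edge of the product flips both $\sigma_1$ and $\sigma_2$; this is what upgrades the two vertex classes into the stated bipartitions of the two components.
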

Given this result one can ask if the two connected components of the product of two bipartite graphs are palindromic if the original graphs were palindronmic too. The answer is again positive as shown next.
\begin{proposition}\label{product_of_bip_pal}
The two connected bipartite components of the tensor product of two (anti)palindromic bipartite graphs are palindromic.
\end{proposition}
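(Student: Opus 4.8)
The plan is to read the two components off Proposition~\ref{Prop:KroenBip}(5) in block form and then push everything down to the two factors. Since $G_1$ and $G_2$ are bipartite and (anti)palindromic, each has a perfect matching by Proposition~\ref{PMonPaliGraphs}, so its two colour classes have the same size; ordering the vertices (part, then copart) we may write
\[
A_{G_1}=\begin{pmatrix}0&B_1\\ B_1^{\top}&0\end{pmatrix},\qquad
A_{G_2}=\begin{pmatrix}0&B_2\\ B_2^{\top}&0\end{pmatrix},
\]
with $B_1$ an $n\times n$ and $B_2$ an $m\times m$ matrix. The constant term of $\chi_{G_i}$ equals $\det A_{G_i}=(-1)^{n_i}(\det B_i)^2$ and has absolute value $1$, so each $B_i$ is invertible with $|\det B_i|=1$.

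I would then identify the two components. By Proposition~\ref{Prop:KroenBip}(5) the first has colour classes $V_1\times V_2$ and $W_1\times W_2$, and $(u,v)\sim(u',v')$ exactly when $u\sim u'$ and $v\sim v'$; hence its biadjacency matrix is $B_1\otimes B_2$. The second, with classes $V_1\times W_2$ and $W_1\times V_2$, has biadjacency matrix $B_1\otimes B_2^{\top}$. Both components are therefore bipartite, with adjacency matrices of the same off-diagonal block shape as above and order $2nm$.

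The heart of the argument is that both components are isospectral and have an inversion-closed spectrum. Recall that the eigenvalues of a bipartite graph with biadjacency $B$ are $\pm$ the singular values of $B$. Since $B_2$ and $B_2^{\top}$ have the same singular values, the matrices $B_1\otimes B_2$ and $B_1\otimes B_2^{\top}$ share the singular-value multiset $\{\sigma^{(1)}_k\sigma^{(2)}_l\}$, so $\chi_{C_1}=\chi_{C_2}$. For inversion-closure I would invoke the algebraic characterization $\Spec{G_i}=\Spec{G_i}^{-1}$: as the spectrum of a bipartite graph is symmetric about $0$, this equality of multisets forces the positive part of the spectrum, i.e. the singular values of $B_i$, to be closed under $\sigma\mapsto 1/\sigma$. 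Consequently the products $\sigma^{(1)}_k\sigma^{(2)}_l$ are closed under inversion too, so each component's spectrum is closed both under negation and under inversion, which already makes each component palindromic or antipalindromic; this is consistent with Proposition~\ref{prop:KP}, since $\chi_{G_1\otimes G_2}=\chi_{C_1}\chi_{C_2}=\chi_{C_1}^{2}$ is palindromic and a square root of a palindromic polynomial is $\pm$-palindromic.

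Finally I would fix the sign, and this is where I expect the real work (and the main obstacle) to lie: the inversion symmetry comes almost for free, but deciding palindromic versus antipalindromic requires tracking parities through the Kronecker product. The sign is read off the constant term of $\chi_{C_1}$, namely $\det A_{C_1}=(-1)^{nm}\big(\det(B_1\otimes B_2)\big)^2=(-1)^{nm}$, using $\det(B_1\otimes B_2)=(\det B_1)^m(\det B_2)^n=\pm1$. Hence each component is palindromic exactly when $nm$ is even. Since a palindromic bipartite graph has even colour classes and an antipalindromic one has odd colour classes, $nm$ is even whenever at least one factor is palindromic; in particular the components are palindromic in the motivating case where both $G_1$ and $G_2$ are palindromic, while the genuinely delicate point is that when both factors are antipalindromic the parity flips and the conclusion must be stated with care.
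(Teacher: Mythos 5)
Your proof is correct, and it takes a genuinely different route from the paper's. The paper argues by eigenvector restriction: every eigenvector of the product $P=G_1\otimes G_2$ has coordinates $z_{uu'}=x_uy_{u'}$ with $x,y$ eigenvectors of the factors; since (anti)palindromic graphs have no zero eigenvalues, $x$ and $y$ are supported on both colour classes of their factors, so each such $z$ restricts to a nonzero eigenvector of each connected component, and hence every eigenvalue of $P$ (whose spectrum is inversion-closed by Proposition~\ref{prop:KP}) is an eigenvalue of each component. Your argument instead works with the biadjacency matrices: the components have biadjacency $B_1\otimes B_2$ and $B_1\otimes B_2^{\top}$, their spectra are $\pm$ the products of singular values, and inversion-closure plus an explicit determinant computation decides everything. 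What your approach buys is exactly what the paper's argument lacks: control of multiplicities (the paper only compares eigenvalue \emph{sets}, which by itself does not yield palindromicity) and, crucially, the sign of the constant term.

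That last point is not a pedantic refinement: your parity analysis exposes a genuine flaw in the proposition as stated. You show each component has constant term $(-1)^{nm}$, where $n,m$ are the sizes of the colour classes of the factors, and that a bipartite palindromic factor forces even colour classes while an antipalindromic one forces odd colour classes. Hence when \emph{both} factors are antipalindromic, $nm$ is odd and the components are \emph{anti}palindromic, not palindromic. The simplest instance confirms this: $K_2$ is bipartite and antipalindromic, and $K_2\otimes K_2$ is two disjoint copies of $K_2$, each with characteristic polynomial $\lambda^2-1$, which is antipalindromic; more generally $K_2\otimes G$ is two copies of $G$ for any bipartite $G$. The paper's proof cannot detect this because ``the components share the eigenvalues of $P$'' does not distinguish palindromic from antipalindromic. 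So the statement is correct, and your proof establishes it, precisely when at least one factor is palindromic --- which covers the paper's only use of it, products of palindromic trees --- and in general should read: the components are palindromic iff $nm$ is even, and antipalindromic iff $nm$ is odd.
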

\begin{proof}
Its is enough to prove that these components share the same eigenvalues of the product graph $P$.
Let $ z$  be an eigenvector of $P$ with eigenvalue $\mu$, and $A_C$ the adjacent matrix of one of its connected components $C$. 
Then, it will always happen that $A  z_C  = \mu  z_C $, where $ z_C$ is the part of $ z$ corresponding to the vertices of $C$. 
So $ z_C $ will be an eigenvector of $A_C$ with eigenvalue $\mu$, whenever $ z_C $ is not null. 
In order to prove the non nullity of $ z_C $ we need to go a bit deeper in the form of $ z$. 
The eigenvector $ z$ will have coordinates $z_{uu'}=(x_u y_{u'})$ with $x$ and $y$ eigenvectors of the graphs in the product.
If the graph corresponding to $x$ has parts $(V,W)$ and adjacent matrix
$$
 \left(\begin{array}{cc}0&A_1\\A_2&0\end{array}\right),
$$
then $A_1 x_{W} = \lam x_{V}$ and  $A_2 x_{V} = \lam x_{W}$.
But since the graphs are (anti)palindromic, their eigenvalues are not null, so $\lam \neq 0$ and $x_V$ is null iff so it is $x_W$. 
Therefore, since $x$ is not null,  neither $x_V$  nor $x_W$ are, and there will be an $v_0 \in V$ with $x_{v_0} \neq 0$.
The same argument is valid for $y$, so  if $(V',W')$ are the parts of the other graph of the product, there will be coordinates not null $y_{v'_0} \in V', y_{w'_0}\in W'$. 
Therefore $x_{v_0}y_{v'_0}$ and  $x_{v_0}y_{w'_0}$ are not null coordinates of $z$ corresponding to  the two components of $P$.
\end{proof}
From parts (1) and (2) in Proposition~\ref{Prop:KroenBip}  it is clear the  following relationship between the hairs of two graphs and the hairs of its product.
\begin{proposition}\label{Prop_KroBip}
Let $G$ be the tensor product  of two connected bipartite  graphs $G_1$ and $G_2$ with parts $(V_1 , W_1)$ and  $(V_2 , W_2)$ respectively, then
\begin{enumerate}
    \item  $G^H = G_1^H \times G_2^H $.
     \item  The hairs of the components of $G$ are $$((V_1 \times V_2)\cup (W_1 \times W_2))^H=(V_1^H \times V_2^H)\cup (W_1^H \times W_2^H)$$ and 
      $((V_1 \times W_2)\cup (W_1 \times V_2))^H =(V_1^H \times W_2^H)\cup (W_1^H \times V_2^H)$. 
\end{enumerate}
\qed
\end{proposition}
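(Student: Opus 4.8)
The plan is to obtain both identities as immediate consequences of Proposition~\ref{Prop:KroenBip}, so that the argument is pure bookkeeping; I expect the only point needing care to be the reading of the part-indexed symbols $V_i^H,W_i^H$.

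For part (1) I would invoke part (2) of Proposition~\ref{Prop:KroenBip}: a vertex $(u,v)$ of $G$ is a hair exactly when both $u\in G_1^H$ and $v\in G_2^H$. Collecting all such vertices gives $G^H=\{(u,v):u\in G_1^H,\ v\in G_2^H\}=G_1^H\times G_2^H$ with no further work.

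For part (2), I would first recall from part (5) of Proposition~\ref{Prop:KroenBip} that $G$ decomposes into the component $C$ on $(V_1\times V_2)\cup(W_1\times W_2)$ and the component $C'$ on $(V_1\times W_2)\cup(W_1\times V_2)$. Because a connected component is an induced subgraph that no edge leaves, every vertex retains its $G$-degree inside its own component; hence the hairs of $C$ are precisely the hairs of $G$ lying in $C$, i.e. $C^H=G^H\cap\big((V_1\times V_2)\cup(W_1\times W_2)\big)$, and likewise for $C'$. Substituting $G^H=G_1^H\times G_2^H$ from part (1) and splitting each factor along its bipartition, $G_1^H=(G_1^H\cap V_1)\cup(G_1^H\cap W_1)$ and $G_2^H=(G_2^H\cap V_2)\cup(G_2^H\cap W_2)$, I would distribute the product into four blocks. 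The two ``unmixed'' blocks $(G_1^H\cap V_1)\times(G_2^H\cap V_2)$ and $(G_1^H\cap W_1)\times(G_2^H\cap W_2)$ lie in $V(C)$, while the two ``mixed'' blocks lie in $V(C')$; intersecting with $V(C)$ keeps exactly the unmixed pair, which in the part-indexed notation is $(V_1^H\times V_2^H)\cup(W_1^H\times W_2^H)$, and the computation for $C'$ returns the mixed pair $(V_1^H\times W_2^H)\cup(W_1^H\times V_2^H)$.

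The one delicate point, which I would flag explicitly, is purely notational: reading $V_i^H$ as the hairs contained in $V_i$ versus the hairs adjacent to $V_i$ (which in a bipartite graph are the hairs contained in $W_i$) merely swaps the labels of the two unmixed blocks and, simultaneously, of the two mixed blocks. Since each displayed set is the union of a matched pair of blocks, it is invariant under the simultaneous swap $V_i^H\leftrightarrow W_i^H$ in both coordinates, so both readings yield the stated formulas and no genuine obstacle remains.
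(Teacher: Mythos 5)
Your proposal is correct and takes essentially the same route as the paper, which offers no proof body at all: it states the proposition as an immediate ("clear") consequence of Proposition~\ref{Prop:KroenBip} and closes it immediately. Your write-up just makes the bookkeeping explicit — part (1) from the degree-product fact, part (2) by intersecting $G^H=G_1^H\times G_2^H$ with the two component vertex sets from part (5) — and your observation that each displayed union is invariant under the simultaneous swap $V_i^H\leftrightarrow W_i^H$ correctly disposes of the only subtlety, namely that the paper's $V^H$ means hairs \emph{adjacent to} $V$ (hence contained in the opposite part) rather than hairs lying in $V$.
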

We are now in position to prove that the product of two hairing graphs is not hairing. 
Let us first identify and count the cardinality of the parts of the hairing of a bipartite graph.

\begin{lemma}\label{Lemma:HairBip}
The hairing of a bipartite graph is bipartite with parts of the same cardinality.
\end{lemma}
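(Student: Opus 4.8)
The plan is to produce an explicit bipartition of $H(G)$ and then simply read off the cardinalities of the two sides. First I would fix a bipartition $(V,W)$ of $G$, so that every edge of $G$ joins a vertex of $V$ to a vertex of $W$, and set $n=|V|+|W|$. Recall that the vertices of $H(G)$ are the pairs $(u,0)$ and $(u,1)$ with $u\in V\cup W$, the vertex $(u,1)$ being the hair attached to $(u,0)$, and that the edges of $H(G)$ are the ``old'' edges $(u,0)(v,0)$ with $uv\in E$ together with the $n$ ``hair'' edges $(u,0)(u,1)$.

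Next I would exhibit the bipartition of $H(G)$ by keeping each base vertex $(u,0)$ on the side dictated by its colour in $G$ and sending each hair $(u,1)$ to the opposite side; concretely, I would take
\[
A = (V\times\{0\})\cup(W\times\{1\}), \qquad B = (W\times\{0\})\cup(V\times\{1\}).
\]
Then I would verify that every edge of $H(G)$ runs between $A$ and $B$: an old edge joins some $(v,0)$ with $v\in V$ to some $(w,0)$ with $w\in W$, hence goes from $A$ to $B$; and a hair edge $(u,0)(u,1)$ has its two endpoints on opposite sides by construction. This reconfirms that $H(G)$ is bipartite (already known, since a graph is bipartite iff its hairing is) and, more importantly, it pins down an explicit pair of parts, which is exactly what the later statements about the hairs of the components will need.

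Finally I would count: $|A|=|V|+|W|=n$ and $|B|=|W|+|V|=n$, so the two sides have equal cardinality. I do not expect any genuine obstacle here; the only point worth stating cleanly is \emph{why} the colour swap on the hairs balances the two sides. The cleanest way to see it is to observe that the $n$ hair edges $\{(u,0)(u,1):u\in V\cup W\}$ form a perfect matching of $H(G)$: in a bipartite graph every matching edge has exactly one endpoint in each part, so a perfect matching forces the two parts to have the same size, giving $|A|=|B|=n$ at once. I would keep the explicit description of $A$ and $B$ for later reference and use the perfect matching only as the quick justification of equality.
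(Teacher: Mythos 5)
Your proposal is correct and follows essentially the same route as the paper: you build the bipartition $A=(V\times\{0\})\cup(W\times\{1\})$, $B=(W\times\{0\})\cup(V\times\{1\})$, which is exactly the paper's $(V\cup W^H,\,W\cup V^H)$, and conclude equality of sizes by the same count $|V|+|W|=|W|+|V|$. Your extra remarks (checking the edges explicitly and the perfect-matching justification) are fine but not needed beyond what the paper does.
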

\begin{proof}
If $G$ is a bipartite graph with parts $V$ and $W$, then the hairing $H(G)$ has parts $(V\cup W^H, W \cup V^H)$. But  $|W^H|=|W|$ and $|V^H|=|V|$,
therefore  $|V\cup W^H| = |V|+|W^H|=|V^H|+|W|=|V^H\cup W|$.
\end{proof}
As we said, the tensor product of two hairing graphs is, in most of the cases, not a hairing one.
\begin{proposition}\label{nothairin}
The tensor product of two hairing graphs different from $K_2$ is  not a hairing one.
\end{proposition}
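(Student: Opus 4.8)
The plan is to compare the number of degree-one vertices (the hairs) on the two sides, exploiting that a hairing is very rich in hairs while a tensor product of two sizeable graphs is comparatively poor in them. The guiding observation, which I would isolate as a \emph{hair-abundance} remark, is this: if $X=\hair{C}$ is \emph{any} hairing, then the $|V(C)|$ pending vertices added in forming $\hair{C}$ all have degree one, and since $|V(X)|=2|V(C)|$ this already gives $|X^H|\ge |V(X)|/2$. In other words, a necessary condition for a graph to be a hairing is that at least half of its vertices be hairs. This lower bound must be stated for arbitrary, possibly disconnected, hairings, since the graph we wish to rule out is a priori only required to be \emph{some} hairing.

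Next I would pin down the hairs of the two factors. Write $G_1=\hair{B_1}$ and $G_2=\hair{B_2}$, which are connected exactly when $B_1,B_2$ are, as required to apply Proposition~\ref{Prop:KroenBip}. Because each $G_i$ is connected and different from $K_2$, the graph $B_i$ is connected with $|V(B_i)|\ge 2$, hence has no isolated vertex; consequently every core vertex of $G_i$ has degree $\deg_{B_i}(\cdot)+1\ge 2$, and the only degree-one vertices of $G_i$ are the $|V(B_i)|$ pending ones. Thus $|G_i^H|=|V(B_i)|=|V(G_i)|/2$. Now I invoke part~(2) of Proposition~\ref{Prop:KroenBip}, which identifies the hairs of $G=G_1\otimes G_2$ as $G_1^H\times G_2^H$; counting gives $|G^H|=|G_1^H|\,|G_2^H|=\tfrac14|V(G_1)|\,|V(G_2)|=\tfrac14|V(G)|$.

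Combining the two computations finishes the argument: a hairing needs at least half of its vertices to be hairs, whereas $G$ has only a quarter of its vertices as hairs, and $\tfrac14|V(G)|<\tfrac12|V(G)|$ since $|V(G)|>0$. Hence $G$ cannot be a hairing, which is precisely the claim.

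The delicate points are where the hypotheses really bite, rather than any deep idea. The assumption ``different from $K_2$'' is exactly what guarantees $|V(B_i)|\ge 2$, so that the cores genuinely fail to be hairs; without it (for instance $G_i=K_2=\hair{K_1}$, where every vertex is a hair) the hair count collapses and the conclusion is false. I expect the main obstacle to be this careful accounting, especially phrasing the lower bound $|X^H|\ge|V(X)|/2$ so that it covers every hairing the product could conceivably be. Should the counting feel unsatisfying, an equivalent structural route is available: in any hairing every non-hair is adjacent to exactly one hair, yet a vertex of the form $(\text{pending},\text{core})$ in $G$ has degree $\ge 2$ by part~(1) of Proposition~\ref{Prop:KroenBip} while, by part~(2), none of its neighbours is a hair (its unique first-coordinate neighbour is a core), again contradicting the hairing structure.
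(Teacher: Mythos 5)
Your proof is correct and takes essentially the same route as the paper's: count the hairs of the product via Proposition~\ref{Prop:KroenBip} (a quarter of the order) and compare with the fact that any hairing must have at least half of its vertices as hairs. The paper's proof is exactly this computation stated more tersely; your extra care with the lower bound for arbitrary hairings and the closing structural alternative are welcome refinements but not a different approach.
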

\begin{proof}
Since the number of hairs of a hairing graph different from $K_2$ is half the order of the graph, 
if one hairing graph has $2n$ vertices and the other one $2m$, then, their product has $4nm$ vertices but $nm$ hairs which is smaller than half $4nm$.
\end{proof}
Despite this proposition, since the product of bipartite graphs is not connected, it could eventually happen that although the product is not hairing one of its components does. 
However, this is not the case, as we establish next.

\begin{proposition}\label{nohairing}
The tensor product of two bipartite graphs with an amount of hairs at most $\alpha$ times its orders has two connected components with an amount of hairs at most $\alpha^2 $ times its orders. If both graphs are the hairing of a graph, then, none of the two connected components of its tensor product is the hairing graph of another one and  both components have the same order.
\end{proposition}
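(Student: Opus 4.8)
The plan is to reduce the whole proposition to two facts already in hand: the product formula for hairs, $G^H=G_1^H\times G_2^H$ from Proposition~\ref{Prop_KroBip}, and the balanced-parts property of hairings from Lemma~\ref{Lemma:HairBip}.

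First I would settle the $\alpha$-bound. Writing $G=G_1\otimes G_2$, Proposition~\ref{Prop_KroBip}(1) gives $G^H=G_1^H\times G_2^H$, so the total number of hairs is $|G^H|=|G_1^H|\,|G_2^H|$; feeding in $|G_i^H|\le\alpha|G_i|$ together with $|G|=|G_1|\,|G_2|$ yields $|G^H|\le\alpha^2|G|$ immediately. Since both factors are bipartite, Proposition~\ref{Prop:KroenBip}(5) produces exactly two connected components, and Proposition~\ref{Prop_KroBip}(2) records how these $|G^H|$ hairs split between them. It is worth stressing that the square is a bound on the product taken as a whole: a single component may well have a larger hair-to-order ratio than either factor --- one component of $P_3\otimes P_3$ is the star $K_{1,4}$, of ratio $4/5$ --- so the factor $\alpha^2$ is recovered only after the two components are counted together.

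Next I would treat the hairing case. Assume $G_1=H(G_1')$ and $G_2=H(G_2')$ with each $G_i\neq K_2$, equivalently each $G_i'$ connected on at least two vertices; this hypothesis cannot be dropped, since $K_2\otimes G$ is two disjoint copies of $G$ and would hand back hairing components. By Lemma~\ref{Lemma:HairBip} each $G_i$ has a balanced bipartition $(V_i,W_i)$ with $|V_i|=|W_i|=:N_i$, so by Proposition~\ref{Prop:KroenBip}(5) both components have order $N_1N_2+N_1N_2=2N_1N_2$; this already gives the equal-order claim. Setting $a_i=|V_i^H|$ and $b_i=|W_i^H|$, the hairs of a hairing of a connected graph on $\ge 2$ vertices are exactly the added pendants, distributed over the two classes according to the two (nonempty) parts of $G_i'$; hence $a_i+b_i=N_i$ and, crucially, $a_i,b_i\ge 1$.

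Proposition~\ref{Prop_KroBip}(2) then gives the hair counts of the two components as $a_1a_2+b_1b_2$ and $a_1b_2+b_1a_2$, whose sum is $(a_1+b_1)(a_2+b_2)=N_1N_2$, precisely half of each component's order $2N_1N_2$. Because $a_i,b_i\ge 1$ forces both $a_1b_2+b_1a_2$ and $a_1a_2+b_1b_2$ to be strictly positive, each component carries strictly fewer than $N_1N_2$ hairs, i.e. strictly fewer than half its vertices. But a hairing other than $K_2$ has exactly half its vertices as hairs (Proposition~\ref{nothairin}), and our components, of order $2N_1N_2\ge 8$, are not $K_2$; hence neither can be a hairing. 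The only genuine obstacle is establishing $a_i,b_i\ge 1$, i.e. that the pendants of each factor genuinely meet both bipartition classes; the entire strict inequality --- and with it the conclusion --- rests on this point, which is exactly where the exclusion of $K_2$ is used. Everything else is bookkeeping with the two product identities.
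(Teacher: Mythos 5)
Your argument is correct, and it relies on the same two ingredients as the paper (Proposition~\ref{Prop_KroBip} and Lemma~\ref{Lemma:HairBip}), but it departs from the paper's proof at both critical steps, and both departures are genuine repairs rather than stylistic choices. For the $\alpha^2$ claim, the paper bounds each component separately: it writes $|V_1^H||V_2^H|+|W_1^H||W_2^H|\le \alpha|V_1|\,\alpha|V_2|+\alpha|W_1|\,\alpha|W_2|$, which tacitly uses the part-wise bounds $|V_i^H|\le\alpha|V_i|$ and $|W_i^H|\le\alpha|W_i|$; these do not follow from the stated hypothesis $|G_i^H|\le\alpha|G_i|$. Your $P_3\otimes P_3$ example (a $K_{1,4}$ component with hair ratio $4/5>(2/3)^2$) shows that the per-component statement is actually false under that hypothesis, so your retreat to the aggregate bound $|G^H|=|G_1^H|\,|G_2^H|\le\alpha^2|G|$ is the strongest true version available, not a shortfall of your write-up.

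The same defect infects the paper's proof of the second claim, which argues that for hairings $\alpha=1/2$, hence ``the ratio is $1/4$, therefore none of them could be a hairing'': a component of a product of two hairings can in fact exceed ratio $1/4$ (in $H(P_3)\otimes H(P_3)$ one component has $1\cdot 1+2\cdot 2=5$ hairs on $18$ vertices). Your substitute argument --- the $N_1N_2$ hairs are split between two components of order $2N_1N_2$ each, and each component receives a strictly positive share because the pendants of each factor meet both parts ($a_i,b_i\ge 1$), so each component has strictly fewer than half of its vertices as hairs, whereas a connected hairing other than $K_2$ has exactly half --- establishes precisely what is needed, namely ratio strictly below $1/2$, and is watertight. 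The equal-order computation coincides with the paper's. Finally, your explicit exclusion of $K_2$, which the paper states in Proposition~\ref{nothairin} but omits here, is indeed necessary, since $K_2\otimes G\cong G\cup G$ hands back hairing components; and you correctly locate where it enters, namely in guaranteeing $a_i,b_i\ge 1$.
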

\begin{proof}
By Proposition~\ref{Prop_KroBip}, and following the notation there, the ratio between the cardinalities of the hairs of each parts of the product with the cardinality of the part is:
\begin{align*}
\frac{|(V_1^H \times V_2^H)\cup (W_1^H \times W_2^H)|}{|(V_1 \times V_2) \cup (W_1 \times W_2)|}&
= \frac{|V_1^H||V_2^H|+|W_1^H||W_2^H|}{|V_1|| V_2|+ |W_1|| W_2|}\leq 
 \frac{\alpha|V_1|\alpha|V_2|+\alpha|W_1|\alpha|W_2|}{|V_1|| V_2|+ |W_1|| W_2|}= \alpha^2,
\end{align*}
for one of the parts, while, by a  similar computation, the other ratio is
\begin{equation*}
\frac{|(V_1^H \times W_2^H)\cup (W_1^H \times V_2^H)|}{|(V_1 \times W_2) \cup (W_1 \times V_2)|}\leq
\frac{\alpha|V_1|\alpha|W_2|+\alpha|W_1|\alpha|V_2|}{|V_1|| W_2|+ |W_1||V_2|} 
= \alpha^2.
\end{equation*}
 If both initial graphs are the hairing of another one, then $\alpha = 1/2$, and the ratio is $1/4$, therefore none of them could be a hairing.
 
In order to prove that both parts have the same cardinality, let us observe that if one graph is the hairing of a bipartite graph with parts $(V_1,W_1)$ and the other the hairing of a bipartite  graph with parts $(V_2, W_2)$, then, the product of the hairing has parts $((V_1\cup W_1^H)\times(V_2\cup W_2^H))\cup ((W_1\cup V_1^H)\times(W_2\cup V_2^H))$ and 
$((V_1\cup W_1^H)\times(W_2\cup V_2^H))\cup ((W_1\cup V_1^H)\times(V_2\cup W_2^H))$.
The first part has cardinality
\begin{align*}
&|((V_1\cup W_1^H)\times(V_2\cup W_2^H))\cup ((W_1\cup V_1^H)\times(W_2\cup V_2^H))|\\
&=(|V_1|+ |W_1|)(|V_2|+|W_2|)+ (|W_1|+ |V_1|)(|W_2|+|V_2|)\\
&=2(|V_1|+ |W_1|)(|V_2|+|W_2|),
\end{align*}
while the second part has cardinality
\begin{align*}
&|((V_1\cup W_1^H)\times(W_2\cup V_2^H))\cup ((W_1\cup V_1^H)\times(V_2\cup W_2^H))|\\
&=(|V_1|+ |W_1|)(|W_2|+|V_2|)+ 
((|W_1|+ |V_1|)(|V_2|+|W_2|)\\
&=2(|V_1|+ |W_1|)(|V_2|+|W_2|),
\end{align*}
which is equal to the first one.
\end{proof}
Let us notice that although  the two connected components mentioned in the proposition have the same order, they need not be isomorphic, since they could even have different amounts of hairs. Indeed, as an example, we can take the product with itself of the hairing of a bipartite graph with parts with different amounts of vertices. If the graph has parts with $a \neq b$ number of vertices, then one component of the product of its hairing with itself will have $a^2+b^2$ hairs while the other one will have $2ab$.

 From these propositions,  we can produce an infinite family of non hairing palindromic graphs. Indeed, from the  Proposition~\ref{product_of_bip_pal} we have that the product of palindromic trees has  two palindromic components of the same order, say $2h$, while from Proposition~\ref{nohairing} none of them is hairing.  Therefore,  by Theorem~\ref{main_theorem} none of them is  a tree either. From this any of these non--trees non--hairing palindromic graphs, we will obtain, by a new tensor product with any other palindromic tree of order $2n$ another  connected non hairing bipartite palindromic graph of order $2h\times 2n /2 = 2 hn$.
 For instance, if we begin by multiplying the  smallest palindromic tree, i.e. $P_4$, with itself, we have $h=4$, and we will obtain a non hairing palindromic graph with order $16n$ for any $n\geq 2$.
 Figure~\ref{fig:P_4xP_4} shows the product of $P_4$ with itself and the two connected non hairing bipartite palindromic non-tree graphs, which, in this case, are isomorphic to the third graph in Figure~\ref{palindromic n=8}.
 From this graph we obtain an infinite family of non haring non-tree palindromic graphs or orders $8n$ for any $n\geq 2$.

 As we showed in the last four graphs in Figure~\ref{palindromic n=8}, there are palindromic \emph{bald} graphs. 
 The first of these four is bipartite, so  we can obtain an infinite family of palindromic bald graphs and order again $8\times 2n /2 = 8n$ for any $n\geq 2$.
 
 Summarizing, we have the following theorem.
\begin{theorem}
  There are palindromic bald graphs, for any  order  multiple of $8$.
\end{theorem}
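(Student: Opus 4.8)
The plan is to manufacture every admissible order from a single order-$8$ seed by one application of the tensor product. Let $B$ denote the first of the four bald graphs in Figure~\ref{palindromic n=8}; it is connected, bipartite, palindromic and bald, of order $8$, so the case $n=1$ of ``order $8n$'' is settled outright. For $n\geq 2$ the idea is to form $B\otimes T$, where $T$ is an (anti)palindromic tree of order $2n$, and to show that each of the two connected components of $B\otimes T$ is a bald palindromic graph of order $8n$.

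First I would secure the supply of trees. By Corollary~\ref{Corolario2} the hairing $H(S)$ of any tree $S$ of order $m$ is an (anti)palindromic tree of order $2m$; since a tree of every order $m\geq 1$ exists, this yields an (anti)palindromic (hence bipartite and connected) tree of every even order $2m\geq 2$. In particular, for each $n\geq 2$ we may fix such a tree $T$ of order $2n$.

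Next I would analyze $B\otimes T$. Since $B$ is palindromic and $T$ is palindromic or antipalindromic, Proposition~\ref{prop:KP} gives that $B\otimes T$ is palindromic, and since both factors are connected and bipartite, Proposition~\ref{Prop:KroenBip} shows the product has exactly two bipartite connected components, each of which is palindromic by Proposition~\ref{product_of_bip_pal}. To pin down their orders I would use that $B$, being palindromic, has a perfect matching (Proposition~\ref{PMonPaliGraphs}); in a bipartite graph a perfect matching forces the two parts to be equinumerous, so $B$ has balanced parts. Writing the parts of $B$ and $T$ as $(V_1,W_1)$ and $(V_2,W_2)$, the two components have orders $|V_1||V_2|+|W_1||W_2|$ and $|V_1||W_2|+|W_1||V_2|$, whose difference $(|V_1|-|W_1|)(|V_2|-|W_2|)$ vanishes; hence both components share the common order $(8\cdot 2n)/2=8n$.

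Finally I would verify baldness. By Proposition~\ref{Prop_KroBip} the hairs of the two components are $(V_1^H\times V_2^H)\cup(W_1^H\times W_2^H)$ and $(V_1^H\times W_2^H)\cup(W_1^H\times V_2^H)$. But $B$ is bald, so it has no vertex of degree $1$ at all, giving $V_1^H=W_1^H=\emptyset$; both displayed sets are therefore empty and each component is bald. Combining this with the order computation and Proposition~\ref{product_of_bip_pal} produces, for every $n\geq 2$, a bald palindromic graph of order $8n$, and together with $B$ for $n=1$ this covers all multiples of $8$. I do not expect any single deep step here; the real work is keeping palindromicity, the exact order $8n$, and the vanishing of hairs under simultaneous control, and the decisive observation is that tensoring with a bald factor annihilates every hair while the perfect-matching/balanced-parts argument forces the even split of the order.
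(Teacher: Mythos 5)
Your proposal is correct and takes essentially the same route as the paper: seed with the order-$8$ bipartite bald palindromic graph from Figure~\ref{palindromic n=8}, tensor it with (anti)palindromic trees of every even order $2n$, and invoke Propositions~\ref{prop:KP}, \ref{Prop:KroenBip}, \ref{product_of_bip_pal} and~\ref{Prop_KroBip} to get two palindromic bald components of order $8n$ each. You actually supply details the paper leaves implicit --- the supply of trees of all even orders via Corollary~\ref{Corolario2} (the paper says ``palindromic tree of order $2n$'', which alone would miss orders $2n\equiv 2 \pmod 4$), the equality of the two components' orders via the perfect-matching/balanced-parts argument, and the explicit vanishing of the hair sets --- whereas the paper's own justification is a brief appeal to its preceding tensor-product construction.
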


\begin{figure}[ht]
    \centering
    \begin{tikzpicture}
      [scale=.6,place/.style={circle,draw=black,thick,fill=black, inner sep=0pt,minimum size=1mm}]

    \foreach \x in {1,2,3,4}{
      \node (\x) at (0, \x) [place] {};
      \node (\x') at (2, \x) [place] {};
    };
    \draw (1)--(2)--(3)--(4);
    \draw (1')--(2')--(3')--(4');
    \node  at (1, 2.5)  {$\otimes$};
    \node  at (3, 2.5)  {$=$};
    \node  at (8, 2.5)  {$=$};
    \node  at (13, 2.5)  {$\cup$};
    \begin{scope}[xshift = 3 cm]
    \foreach \x in {1,2,3,4}
        \foreach \y in {1,2,3,4}{
      \node (\x\y) at (\x, \y) [place] {};
    };
    \foreach \x/\z in {1/2,2/3,2/1,3/2,3/4,4/3}
        \foreach \y/\t in {1/2,2/3,2/1,3/2,3/4,4/3}{
      \draw  (\x\y)--(\z, \t) ;
    };
    \end{scope}
    \begin{scope}[xshift = 8 cm]
    \foreach \x/\y in {1/1,1/3,2/2,2/4,3/1,3/3, 4/2,4/4}{
      \node (\x\y) at (\x, \y) [place] {};
    };
    \foreach \x/\y in {11/22,22/13,22/31,22/33,33/24,33/42,33/44,31/42,42/31,42/33,13/24}
      \draw  (\x)--(\y) ;
    \end{scope}
    \begin{scope}[xshift = 13 cm]
    \foreach \x/\y in {1/2,1/4,2/1,2/3,3/2,3/4,4/1,4/3}{
      \node (\x\y) at (\x, \y) [place] {};
    };
    \foreach \x/\y in {12/21,12/23,14/23,21/32,23/32,23/34,32/41,34/43,32/43}
      \draw  (\x)--(\y) ;
    \end{scope}
    \end{tikzpicture} 
    \caption{$P_4\otimes P_4$}
    \label{fig:P_4xP_4}
\end{figure}
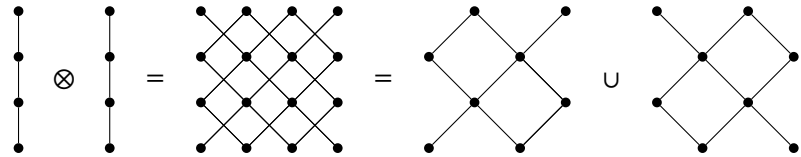
From the 5th and 9th graphs in Figure~\ref{palindromic n=8} we check that there are non bipartite palindromic graphs. Moreover, the 9th graph is bald, so the product of  this graph with itself $k$ times gives  a sequence of palindromic non bipartite bald graphs with orders $8^k$.

By computer search aided by Nauty~\cite{nauty},  we have verified that there are no palindromic graphs of order 10, but 53 antipalindromic ones and 326 absolutely palindroimc non antipalindromic. From those 53, 3 are hairing of the 3 non isomorphic trees of order 5, and 18 are hairing of the other connected non-tree graphs. On the other hand, in the case of order 12, we have computed 368 palindromic triangle free graphs and only 43 absolutely palindromic triangle free graphs which are not palindromic. 
The condition of triangle free seems to reduce the number of absolutely palindromic. For instance, in the case of order 8, there are only 2 free triangles absolutely palindromic out of 21. We summarize our numerical results in Table~\ref{tab:results}.

\begin{table}[h]
\caption{Computer results, where P. =``Palindromic'', A. =``Antipalindromic', $|$P.$|$ = ``Absolutely Palindromic'', T. =``tensor'', P. = ``Product'', H. = ``Hairing''.  \cite[A005142,A001349]{oeis} }
    \label{tab:results}
    \centering
    \begin{tabular}{ccccccccc}
    \toprule
    $n$ & P. & A & $|$P.$|$& Trees & T.P. & H.P. & H.A. & H.  $|$P.$|$ \\\midrule
    2     &  0  & 1 & 1  & 1 & 0 & 0 & 1 & 1 \\
    4     &  1  & 0 & 1  & 1 & 0 & 1 & 0 & 1 \\
    6     &  0  & 4 & 4  & 1 & 0 & 0 & 2 & 2 \\
    8     &  14 & 0 & 35 & 2 & 1 & 5 & 0 & 4\\
    10    & 0   & 53& 326& 3 & 0 & 0 & 5 & 9  \\
    12 &  $\geq368$&0&  $\geq389$& 7 & 6 & 17 & 0  & 22\\
    &\\\bottomrule
    \end{tabular}
    
\end{table}

\section{Conclusions}
We have characterized those (anti)palindromic trees as those made by adding a pending vertex to each vertex of a given graph.
This implies that the structural complexity of a (anti)palindromic tree could be  as rich as the family of trees itself. 

From a computational point of view, one can wonder which advantage gives  the present characterization. 
A quick answer could be that this characterization  allows a recognition in linear time, in contrast with the $O (n^{2.3})$ algorithm to compute the characteristic polynomial based on the Coppersmith-Winograd algorithm.

Unfortunately, despite the tree's case, applying the map $H$ to a graph is not the only way to obtain (anti)palindromic graphs. So we left  the general case open. 
Nevertheless a different way to produce palindromic graphs was introduced, such that a sequence of graphs with no pending vertices is  built.

%\subsection{Bibliography}
%\nocite{*}% Show all bib entries - both cited and uncited; comment this line to view only cited bib entries;
\bibliography{Palindromic}%
% \begin{enumerate}[1]
% \item Use \verb"\bibliography{wileyNJD-AMA}" BST file for AMA reference style
% \item Use \verb"\bibliography{wileyNJD-APA}" BST file for APA reference style
% \item Use \verb"\bibliography{wileyNJD-AMS}" BST file for AMS reference style
% \item Use \verb"\bibliography{wileyNJD-VANCOUVER}" BST file for Vancouver reference style
% \item Use \verb"\bibliography{wileyNJD-ACS}" BST file for Chemistry reference style
% \end{enumerate}

% The normal commands for producing the reference list are:

% \begin{verbatim}
% \begin{thebibliography}{99}
% \bibitem{<x-ref label>}
%          <Reference details>
% .
% .
% .
% \end{thebibliography}
% \end{verbatim}

% \subsection{Appendix Section}

% \begin{verbatim}
% \appendix

% \section{Section title of first appendix\label{app1}}
% .
% .
% .

% \end{verbatim}

\end{document}